\DeclareMathOperator{\boxi}{box} 
\DeclareMathOperator{\KG}{K}
\def\Ascr{\mathcal{A}}
\def\Bscr{\mathcal{B}}
\def\Cscr{\mathcal{C}}
\def\Escr{\mathcal{E}}
\def\Fscr{\mathcal{F}}
\def\Iscr{\mathcal{I}}
\def\Oscr{\mathcal{O}}
\title{\texorpdfstring{On the Boxicity of  Line Graphs \\ and of Their Complements\thanks{A shorter account of a part of the results of this paper appeared in the Proceedings of the 31st International Symposium on Graph Drawing and Network Visualization \cite{GD23_Caoduro}.
This article, besides containing full proofs of theorems of the announced theorems, presents new results on the boxicity of line graphs.}}{On the Boxicity of Line Graphs and of Their Complements}}
\author{Marco Caoduro\inst{1} \and Andr\'as Seb\H{o}\inst{2}}
\institute{Sauder School of Business, University of British Columbia, Vancouver, Canada \email{marco.caoduro@ubc.ca} \and CNRS, Laboratoire G-SCOP, Univ.~Grenoble Alpes, Grenoble, France \email{andras.sebo@cnrs.fr}}
\begin{document}

\maketitle

\begin{abstract}
 
    The boxicity of a graph is the smallest dimension $d$ allowing a representation of it as the intersection graph of a set of $d$-dimensional axis-parallel boxes. We present a simple general approach to determining the boxicity of a graph based on studying its ``interval-order subgraphs.''
    
    The power of the method is first tested on the boxicity of some popular graphs that have resisted previous attempts:  the boxicity of the Petersen graph is $3$,  and more generally, that of the Kneser-graphs $\KG(n,2)$ is $n-2$ if $n\ge 5$, confirming a conjecture of Caoduro and Lichev [Discrete Mathematics, Vol. 346, 5, 2023]. 

    As every line graph is an induced subgraph of the complement of $\KG(n,2)$, the developed tools show furthermore that line graphs have only a polynomial number of edge-maximal interval-order subgraphs. This opens the way to polynomial-time algorithms for problems that are in general $NP$-hard: for the existence and optimization of interval-order subgraphs of line graphs, or of interval completions and the boxicity of their complement, if the boxicity is bounded.  
    We finally extend our approach to upper and lower bounding the boxicity of line graphs.
   
    \keywords{Boxicity \and Interval orders \and Interval-completion \and Kneser-graphs \and Line Graphs}
\end{abstract}


\section{Introduction}

Boxicity is a graph parameter introduced by Roberts~\cite{1969_Roberts} in 1969.
The \emph{boxicity} of a graph $G=(V,E)$, denoted by $\boxi(G)$, is the minimum dimension $d$ such that $G$ is the intersection graph of a family of axis-parallel boxes in $\mathbb R^d$.
The \emph{intersection graph} of a finite set $\Fscr$ is the graph $G(\Fscr)$ having vertex-set $\{v_A: A \in \Fscr\}$ and edge-set $\{v_A v_B : A,B \in \Fscr, \ A \neq B, \textrm{ and } A\cap B \neq \emptyset\}$.
A graph $G$ has $\boxi(G) = 0$ if and only if $G$ is a complete graph,  $\boxi(G) \leq 1$ if and only if it is an {\em interval graph}, and $\boxi(G) \le k$ if it is the intersection of $k$ interval graphs~\cite{1983_Cozzens}.
Complements of interval graphs are called here {\em interval-order graphs}.

Interval graphs, and therefore also interval-order graphs, can be recognized in linear time \cite{1976_Booth} while determining whether a graph has boxicity at most $2$ is $NP$-complete~\cite{1994_Kratochvil_NP}. 
In the language of parameterized complexity, the computation of boxicity is not in the class \emph{XP} of problems solvable in polynomial time when the parameter boxicity is bounded by a constant.

Boxicity has been a well-studied graph parameter: 
Roberts~\cite{1969_Roberts} proved that any graph $G$ on $n$ vertices has boxicity at most $\left \lfloor \frac{n}{2} \right \rfloor$. 
Esperet~\cite{2016_Esperet} showed that the boxicity of graphs with $m$ edges is $\mathcal{O}(\sqrt{m\log{m}})$.
Results on planar graphs or graphs of bounded genus can be found in 
Scheinerman~\cite{1984_Scheinerman}, Thomassen~\cite{1986_Thomassen}, Chandran \&  Sivadasan~\cite{2007_Chandran}, Esperet~\cite{2013_Esperet}, and Esperet \&  Joret~\cite{2017_Esperet}.
For additional results on the boxicity of other special graph classes, see, e.g., \cite{2011_Bhowmick,2011_Chandran_Bipartite,2011_Chandran,2018_Kamipebbu}.

In this paper, we study the boxicity of line graphs and their complements. 
Given a graph $G$, the \emph{line graph} of $G$, denoted by $L(G)$, is the intersection graph of $E(G)$.
The \emph{complement} of a graph $G$ is denoted by $\overline G$, while the \emph{complete graph} on $n$ vertices, or a vertex set $V$, is denoted by $K_n$ or $K_V$, respectively.
Line graphs and their complements are an extensively studied class of graphs with connections to fundamental topics in graph theory.
For instance,  stable sets in line graphs (equivalently, cliques in their complements) correspond to matchings, their chromatic number encodes edge-colouring, and they play a central role for claw-free graphs.
Furthermore,  complements of line graphs include the simplest Kneser graphs $\KG(n,2)$, for $n\geq 5$.
Indeed, $\KG(n,2)$ is isomorphic to $\overline{L(K_n)}$.

Given $k$ and $n$ two positive integers such that $n \geq 2k + 1$, the \emph{Kneser graph} $\KG(n,k)$ is the graph whose vertex set consists of all subsets of $[n] \coloneqq \{1,2,\dots,n\}$ of size $k$, where two vertices are adjacent if their corresponding $k$-sets are disjoint.
Kneser graphs stimulated deep and fruitful graph theory ``building bridges'' with other parts of mathematics.
For instance, Lov\'asz's proof~\cite{1978_Lovasz} of Kneser's conjecture~\cite{1956_Kneser} is the  source of the celebrated ``topological method.''
Several papers have focused on other properties of Kneser graphs; see, e.g., \cite{2000_Chen,1986_Frankl,2013_Harvey,1978_Lovasz}.

The study of the boxicity of Kneser-graphs $\KG(n,k)$ was initiated by Stehl\'{i}k as a question \cite{QuestionMatej}. 
Caoduro and Lichev~\cite{2023_Caoduro} established a general upper bound of $n-2$, a lower bound of $n - \frac{13k^2-11k+16}{2}$  for $n\geq 2k^3 - 2k^2 + 1$, and a lower bound of $n-3$ for $k=2$ that nearly matches the upper bound.
They also conjectured that $\boxi(\KG(n,2)) = n-2$.
Already for the case $n=5$ new ideas are required; to the best of our knowledge, the only declared solution for this particular graph finishes with a computer-based case-checking~\cite[Section 6]{2023_Caoduro}.
We establish the conjecture in its full generality.

\begin{theorem}\label{thm:box_kneser}
	The boxicity of the Kneser-graph $\KG(n,2)$ is $n-2$.
    In particular, the boxicity of the Petersen graph $\KG(5,2)$ is $3$.
\end{theorem}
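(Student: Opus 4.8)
The plan is to reduce the theorem to a sharp bound on the size of the interval-order subgraphs of $L(K_n)$ and then to a counting argument. Since $\KG(n,2)\cong\overline{L(K_n)}$ and, by the characterization recalled above, $\boxi(G)\le k$ exactly when $G$ is the intersection of $k$ interval graphs, complementation shows that $\boxi(G)$ equals the minimum number of interval-order graphs (comparability graphs of interval orders, i.e.\ complements of interval graphs) that are spanning subgraphs of $\overline G$ and whose edge sets cover $E(\overline G)$. Applied to $G=\KG(n,2)$, this says precisely that $\boxi(\KG(n,2))$ is the least number of \emph{interval-order subgraphs} of $L(K_n)$ whose union is $L(K_n)$. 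The upper bound $\boxi(\KG(n,2))\le n-2$ is already available from Caoduro and Lichev, so the task is the matching lower bound $\boxi(\KG(n,2))\ge n-2$.

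For the counting I first record the exact size of the host graph. The edges of $L(K_n)$ are in bijection with the cherries (paths on two edges) of $K_n$: a pair of intersecting $2$-subsets $\{i,j\},\{i,k\}$ corresponds to the cherry with centre $i$ and leaves $j,k$, and each cherry arises from a unique such pair. Hence $|E(L(K_n))| = n\binom{n-1}{2} = \binom{n}{2}(n-2)$. The lower bound will follow immediately from the key lemma that \emph{every interval-order subgraph of $L(K_n)$ has at most $\binom{n}{2}$ edges}: a family of fewer than $n-2$ such subgraphs covers at most $(n-3)\binom{n}{2} < \binom{n}{2}(n-2) = |E(L(K_n))|$ edges and cannot be a cover. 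The Petersen graph is the instance $n=5$, where the bound reads \quotes{at most $\binom{5}{2}=10$ edges per subgraph}, so at least $30/10 = 3$ subgraphs are needed; this replaces the earlier computer-assisted check by a special case of the uniform argument.

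The heart of the proof is therefore the key lemma, and this is where the main difficulty lies. I would phrase it through interval representations: an interval-order subgraph of $L(K_n)$ is the same as an assignment $e\mapsto I_e$ of a closed interval to every edge $e$ of $K_n$ such that vertex-disjoint edges receive intersecting intervals; the covered cherries are then exactly the intersecting pairs $e,f$ whose intervals are disjoint, and I must show there are at most $\binom{n}{2}$ of them. The constraint on disjoint edges is rigid: by the Helly property for intervals, any matching of $K_n$ has a point common to all its intervals, and if the intervals around one vertex are spread into a long chain then every edge avoiding that vertex must stretch across the whole chain, forcing its interval to overlap most others and thereby suppressing covered cherries elsewhere. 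Turning this \quotes{spreading one star forces overlaps everywhere else} phenomenon into the exact bound $\binom{n}{2}$ is the crux; concretely I would sweep the line and charge each covered cherry to a pair of elements of $[n]$ (or, in the language of the paper, measure the $\gap$-structure of the representation), aiming at an injective-type accounting that caps the total at $\binom{n}{2}$. The bound is best possible: combined with the existence of an $(n-2)$-cover it forces that cover to partition $E(L(K_n))$ into $n-2$ interval-order subgraphs of exactly $\binom{n}{2}$ edges each, which also pins down the extremal (edge-maximal) interval-order subgraphs used elsewhere in the paper.
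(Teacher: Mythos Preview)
Your approach collapses at the key lemma: it is \emph{false} that every interval-order subgraph of $L(K_n)$ has at most $\binom{n}{2}$ edges. The paper's Lemma~\ref{lem:maximal_co_interval} classifies the maximal ones, and those of type~(a) have $\frac{(n+2)(n-1)}{2}=\binom{n}{2}+(n-1)$ edges, strictly more than your claimed bound. Concretely, for the Petersen case $n=5$ the three types have $14$, $16$, $15$ edges respectively, so two subgraphs could in principle supply $16+16=32>30=|E(L(K_5))|$; a pure counting argument cannot rule out a $2$-cover, and the paper's Lemma~\ref{lem:box_petersen} instead shows that any two such subgraphs must share at least two edges. More generally, with the true maximum $\frac{(n+2)(n-1)}{2}$, the naive counting lower bound $\big\lceil |E(L(K_n))|\,/\,\frac{(n+2)(n-1)}{2}\big\rceil=\big\lceil\frac{n(n-2)}{n+2}\big\rceil$ is roughly $n-4$ and never reaches $n-2$.

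Your intuition that spreading one star forces overlaps elsewhere is correct, but the conclusion it supports is different: the large type-(a) subgraphs are essentially $Q_a$ (one full star-clique) together with a thin layer $\delta_{ab}\cup\delta_{ad}\cup\cdots$, and the paper's lower-bound proof (Lemmas~\ref{lem:box_petersen}--\ref{lem:boxlk7+}) exploits that any two of them must \emph{intersect} in a controlled number of edges. The argument is thus not ``each piece is small'' but ``the pieces cannot be made disjoint,'' and the accounting is over pairwise intersections (via~(\ref{fact:intersecting_deltas}) and pigeonhole on the vertices $a_i,b_i,c_i$), not over individual sizes. Your proposed charging scheme would need to be redirected toward bounding these overlaps rather than toward the false size cap.
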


The proof of Theorem~\ref{thm:box_kneser} starts with a well-known rephrasing of boxicity in terms of  ``interval completion,''
allowing a graph theory perspective. 
An \emph{interval completion} of $G$ is an interval graph containing $G$ as a subgraph.

This approach reveals that each interval completion of $\KG(n,2)$ is uniquely determined by the choice of at most $5$ indices in $[n]$. 
Thus, the number of interval completions is polynomial.
Although the list is already non-trivial for small Kneser graphs, there are only $4$ essentially different interval completions of $\KG(n,2)$. 

Given a graph $G$ on $n$ vertices, $\overline{L(G)}$ is isomorphic to an induced subgraph of $\overline{L(K_n)} = \KG(n,2)$.
Leveraging this, we obtain that also the complement of any line graph has only a polynomial number of interval completions:

\begin{lemma}\label{lem:all_minimal_completion}
    Let $G=(V, E)$ be a graph on $n$ vertices.
    Then $\overline{L(G)}$ has at most $n^5$ inclusion-wise minimal interval completions, all of which can be enumerated in $\Oscr(n^7)$~time. 
\end{lemma}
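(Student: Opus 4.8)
The plan is to derive the bound from the structural description of the interval completions of $\KG(n,2)$ established in the proof of Theorem~\ref{thm:box_kneser}. Write $K := \KG(n,2) = \overline{L(K_n)}$, whose vertices are the $2$-subsets of $[n]$, and set $S := E(G)$, so that $H := \overline{L(G)}$ is exactly the induced subgraph $K[S]$. Recall that Theorem~\ref{thm:box_kneser}'s proof shows that every minimal interval completion of $K$ is specified by a choice of at most $5$ indices of $[n]$, so $K$ has at most $n^5$ inclusion-wise minimal interval completions. The whole lemma will follow once I show that every minimal interval completion of $H$ is the restriction to $S$ of one of these.

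First I would record the easy direction: if $I$ is any interval completion of $K$, then $I[S]$ is an interval graph (interval graphs are hereditary) containing $K[S] = H$, hence an interval completion of $H$. The substance is the converse lifting. Given a minimal interval completion $J$ of $H$, take an interval representation of $J$ and add, for every vertex in $V(K)\setminus S$, an interval spanning the whole real line; this yields an interval graph $I$ on $V(K)$ in which the new vertices are universal. Then $I[S]=J$ (the added vertices carry no edges inside $S$), and $I \supseteq K$: the edges of $K$ inside $S$ lie in $H \subseteq J$, and every edge of $K$ incident to $V(K)\setminus S$ is present because those vertices are universal. Thus $I$ is an interval completion of $K$ with $I[S]=J$.

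Next I would pass to a minimal completion of $K$ below $I$. Among the interval completions of $K$ contained in $I$, pick an inclusion-minimal one $I'$; a one-line argument shows $I'$ is in fact a globally minimal interval completion of $K$. Since $K \subseteq I' \subseteq I$, restricting to $S$ gives $H \subseteq I'[S] \subseteq I[S] = J$, and $I'[S]$ is again an interval completion of $H$. As $J$ is a \emph{minimal} interval completion of $H$, the sandwich forces $I'[S]=J$. Hence every minimal interval completion of $H$ equals $I'[S]$ for some minimal interval completion $I'$ of $K$; since there are at most $n^5$ of the latter, the map $I' \mapsto I'[S]$ has at most $n^5$ distinct images, bounding the number of minimal interval completions of $H$ by $n^5$.

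Finally, for the running time I would turn the counting into an enumeration: iterate over the at most $n^5$ admissible index-tuples, and for each build the corresponding minimal interval completion $I'$ of $K$ (on its $\binom n2 = \Oscr(n^2)$ vertices) and its restriction $I'[S]$ in $\Oscr(n^2)$ time directly from the explicit interval representation, then discard duplicates and non-minimal candidates. I expect the main obstacle to be precisely this algorithmic bookkeeping: the structural count $n^5$ is immediate from the lifting argument, but keeping construction, restriction, and the minimality/duplicate filtering within an overall $\Oscr(n^7)$ — rather than a cruder polynomial — is where the care is needed. The lifting argument itself is robust, so the real work is organizing the enumeration so that each of the $\Oscr(n^5)$ candidates costs only $\Oscr(n^2)$.
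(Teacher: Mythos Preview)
Your argument is correct and is essentially the dual of the paper's own proof: the paper works on the complement side, with maximal interval-order subgraphs of $L(G)$ inside $L(K_V)$ (using Lemma~\ref{lem:maximal_co_interval} for the $n^5$ bound), while you phrase the same restriction idea in terms of minimal interval completions of $\overline{L(G)}$ inside $\overline{L(K_V)}$. The only cosmetic difference is that on the interval-order side the lifting is immediate (an interval-order subgraph of $L(G)$ is already an interval-order subgraph of $L(K_V)$ once the missing vertices are added as isolated), so the paper avoids your explicit universal-vertex construction; and the paper does not attempt any minimality or duplicate filtering --- it simply outputs the superset $\Bscr$ of size at most $n^5$, which suffices for the stated bound and for the downstream applications.
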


Determining the minimum number of edges needed for an interval completion of a given graph is known as the {\em Interval Graph Completion problem (IGC)}.
This problem is among the first $NP$-hard problems~\cite{1979_Garey}.
However, by Lemma~\ref{lem:all_minimal_completion}, for the complement of a line graph, we can generate all its minimal interval completions and select the one with the fewest edges.
We obtain the following result.

\begin{theorem}\label{thm:minimum_completion} 
IGC is polynomial-time solvable for complements of line graphs. 
\end{theorem}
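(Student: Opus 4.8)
The plan is to reduce the Interval Graph Completion problem on $\overline{L(G)}$ to a finite optimization over the minimal interval completions, whose total number and enumeration cost are both controlled by Lemma~\ref{lem:all_minimal_completion}. First I would recall the standard fact that for IGC it suffices to minimize over the \emph{inclusion-wise minimal} interval completions of the input graph: any interval completion contains a minimal one as a subgraph, so an interval completion with the fewest edges can always be taken to be minimal. This is the conceptual bridge between the parameter we wish to minimize (number of added edges, equivalently total number of edges of the completion) and the structural object Lemma~\ref{lem:all_minimal_completion} gives us a handle on.

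Next I would invoke Lemma~\ref{lem:all_minimal_completion} directly: for $G$ on $n$ vertices, the graph $\overline{L(G)}$ has at most $n^5$ inclusion-wise minimal interval completions, and they can all be enumerated in $\Oscr(n^7)$ time. The algorithm for IGC on $\overline{L(G)}$ is then immediate: enumerate all the minimal interval completions, count the edges of each, and return one minimizing this count. Since the minimum over all interval completions is attained at a minimal one, this procedure outputs the true optimum.

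The remaining step is the running-time bookkeeping. The enumeration itself costs $\Oscr(n^7)$ by the Lemma. For each of the $\Oscr(n^5)$ candidates I would count its edges, which costs time polynomial in the size of $\overline{L(G)}$; since $L(G)$ has $|E(G)| \le \binom{n}{2}$ vertices, $\overline{L(G)}$ has $\Oscr(n^2)$ vertices and hence $\Oscr(n^4)$ edges, so edge-counting and comparison over all candidates adds only a further polynomial factor. The overall procedure is therefore polynomial in $n$, establishing that IGC is polynomial-time solvable for complements of line graphs.

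The main obstacle is not in this argument, which is essentially a one-line reduction, but in the justification that minimality suffices and that the quantity we minimize is the right one. Concretely, I would want to state carefully that minimizing the number of \emph{added} edges is equivalent to minimizing the \emph{total} number of edges of the completion (since the edge set of $\overline{L(G)}$ is fixed), and that restricting attention to minimal completions loses no optimal solution. Granting Lemma~\ref{lem:all_minimal_completion}, the theorem follows, so all the real difficulty has been absorbed into the enumeration result established earlier.
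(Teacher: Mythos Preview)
Your proposal is correct and follows exactly the paper's approach: invoke Lemma~\ref{lem:all_minimal_completion} to enumerate all minimal interval completions of $\overline{L(G)}$ in $\Oscr(n^7)$ time and select one with the fewest edges. The paper's proof is in fact just this two-line argument; your additional justification (that an optimum is attained at a minimal completion, and the edge-counting cost) fills in routine details the paper leaves implicit.
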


Polynomial complexity follows for detecting bounded boxicity, placing the computation of the boxicity of complements of line graphs in the class $XP$. 

\begin{theorem}\label{thm:box_co_line}
Let $G=(V, E)$ be a graph on $n$ vertices and $k$ a positive integer. Then it can be decided in $\Oscr(n^{5k+4})$-time whether $\boxi( \overline{L(G)} ) \leq k$. 
\end{theorem}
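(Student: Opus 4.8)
The plan is to combine the classical reformulation of boxicity in terms of interval completions with the polynomial enumeration of minimal completions granted by Lemma~\ref{lem:all_minimal_completion}, and then to brute-force a covering problem whose solution size is exactly the fixed parameter $k$.

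Write $H \coloneqq \overline{L(G)}$. Recall that $\boxi(H) \le k$ holds precisely when $H$ is the intersection of $k$ interval graphs; since each such interval graph must contain $H$, it is an interval completion of $H$, and the intersection equals $H$ exactly when every non-edge of $H$ is a non-edge of at least one of the $k$ chosen completions. First I would observe that one may restrict attention to inclusion-wise \emph{minimal} interval completions: if $I \supseteq H$ is any interval completion and $I' \subseteq I$ is a minimal interval completion below it, then the non-edge set of $I'$ contains that of $I$, so replacing $I$ by $I'$ can only enlarge the set of covered non-edges while preserving $I' \supseteq H$. Hence $\boxi(H) \le k$ if and only if there exist at most $k$ minimal interval completions of $H$ whose non-edges together cover all non-edges of $H$.

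This recasts the decision as a set-cover instance. The ground set is the set of non-edges of $H$; since the vertex set of $L(G)$ is $E(G)$, which has at most $\binom{n}{2}$ elements, this ground set has size $\Oscr(n^4)$. The available covering sets are the non-edge sets of the minimal interval completions of $H$, of which, by Lemma~\ref{lem:all_minimal_completion}, there are at most $n^5$, all listable in $\Oscr(n^7)$ time. I would enumerate them and, for each, record its non-edge set as a subset of the ground set, at a total cost of $\Oscr(n^9)$.

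Finally, since $k$ is a fixed constant, I would test every sub-collection of at most $k$ of these $\le n^5$ completions --- there are $\Oscr(n^{5k})$ of them --- and check for each whether the union of their non-edge sets is all of the $\Oscr(n^4)$ non-edges of $H$, each check costing $\Oscr(n^4)$. This is the familiar fact that set cover with a bounded solution size lies in the class \emph{XP}. The total running time is $\Oscr(n^{5k}\cdot n^4) = \Oscr(n^{5k+4})$, which dominates the $\Oscr(n^7)$ enumeration and $\Oscr(n^9)$ preprocessing costs for every $k \ge 1$. The main point to get right is the monotonicity reduction of the first step --- verifying that covering all non-edges of $H$ by minimal completions is genuinely equivalent to the boxicity condition --- after which the counting of completions and the exhaustive cover search are routine.
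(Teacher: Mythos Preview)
Your proposal is correct and follows essentially the same approach as the paper: enumerate the $\Oscr(n^5)$ minimal interval completions via Lemma~\ref{lem:all_minimal_completion} and brute-force over all $k$-tuples, checking each in $\Oscr(n^4)$ time. The paper phrases everything in the complementary language of maximal interval-order subgraphs of $L(G)$ covering $E(L(G))$ (via Lemma~\ref{lem:Coz_Rob}) rather than minimal completions of $\overline{L(G)}$ covering its non-edges, and it notes the slightly sharper bound $|E(L(G))| = \Oscr(n^3)$ (yielding $f(n,k)=\Oscr(kn^3)=\Oscr(n^4)$ after disposing of the trivial case $k\ge n-2$), but the argument is otherwise identical.
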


The boxicity of line graphs has been previously studied in~\cite{2011_Chandran}, where it is shown that for a line graph $L(G)$ with maximum degree $\Delta$, $\boxi(L(G))$ is  $\mathcal{O}(\Delta \log \log \Delta)$. 
This result improves upon an earlier bound in~\cite{2010_Adiga}, which established that the boxicity is $\mathcal{O}(\Delta \log^2 \Delta)$ for (general) graphs with maximum degree $\Delta$. %

The techniques developed for finding interval-order subgraphs of a graph can also be applied to study the boxicity of line graphs.
However, unlike for complements of line graphs, the number of interval-order subgraphs of line graphs is not polynomial.
Nevertheless, our simplified approach for exhibiting interval order subgraphs allows us to establish new bounds for the boxicity of line graphs.
\begin{theorem}\label{thm:line_upper}
Let $L(G)$ be a line graph on $n$ vertices.
Then $\boxi(L(G)) \leq \lceil 5 \log n\rceil$.
\end{theorem}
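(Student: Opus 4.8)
The plan is to exploit the reformulation of boxicity already used in the paper: $\boxi(L(G)) \le k$ if and only if $L(G)$ is the intersection of $k$ interval completions, equivalently $\overline{L(G)}$ is the union of $k$ interval-order subgraphs. Recall that the vertices of $L(G)$ are the $n$ edges of $G$, and that $ef$ is an edge of $\overline{L(G)}$ exactly when $e$ and $f$ are vertex-disjoint edges of $G$. So it suffices to produce $\lceil 5\log n\rceil$ interval completions of $L(G)$ whose intersection is $L(G)$; each then contributes one interval graph to an intersection representation of $L(G)$.

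The key construction, matching the paper's simplified way of exhibiting interval-order subgraphs, is to read off an interval completion from a linear order of $V(G)$. Fix a bijection $\pi\colon V(G)\to\{1,\dots,|V(G)|\}$ and assign to every edge $e=\{x,y\}$ of $G$ the interval $I^\pi_e=[\min\{\pi(x),\pi(y)\},\ \max\{\pi(x),\pi(y)\}]$. Let $J_\pi$ be the intersection graph of the family $\{I^\pi_e\}$. Two edges sharing a vertex $v$ both contain the point $\pi(v)$, so their intervals meet; hence $J_\pi\supseteq L(G)$ and $J_\pi$ is an interval completion of $L(G)$. Conversely, $I^\pi_e$ and $I^\pi_f$ are disjoint precisely when $\pi$ places all endpoints of one edge strictly before all endpoints of the other, which forces $e$ and $f$ to be vertex-disjoint. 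Consequently $\bigcap_\pi J_\pi=L(G)$ holds for a family of orders if and only if every vertex-disjoint pair $e,f$ is \emph{separated} by some order in the family, meaning one of the two edges precedes the other entirely.

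It remains to bound the number of orders needed to separate all vertex-disjoint pairs, which I would do probabilistically. For a fixed vertex-disjoint pair $e,f$ (four distinct vertices), a uniformly random order separates them with probability $8/24=1/3$, since among the $4!$ relative orders of the four endpoints exactly $2!\cdot 2!$ put $e$ before $f$ and as many put $f$ before $e$. Drawing $k$ independent random orders, the probability that a given pair is never separated is $(2/3)^k$, and there are fewer than $n^2$ such pairs. By the union bound the expected number of unseparated pairs is less than $n^2(2/3)^k$; for $k=\lceil 5\log n\rceil$ (natural logarithm) this is at most $n^{2}\cdot n^{-5\ln(3/2)}=n^{-(5\ln(3/2)-2)}<1$ because $5\ln(3/2)=2.027\ldots>2$. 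Hence some choice of $\lceil 5\log n\rceil$ orders separates every vertex-disjoint pair, and the corresponding interval graphs witness $\boxi(L(G))\le\lceil 5\log n\rceil$. The only real content is the order-to-interval-completion construction of the second paragraph; the counting is then routine, and if an explicit rather than existential bound is wanted one can derandomize the selection greedily by the method of conditional expectations. I expect the only minor obstacle to be bookkeeping the constant, which works out since $2/\ln(3/2)\approx 4.93<5$.
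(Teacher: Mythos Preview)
Your proof is correct and essentially the same as the paper's: both build interval completions of $L(G)$ from a permutation $\pi$ of $V(G)$ by assigning to each edge $e=\{x,y\}$ the interval $[\min\{\pi(x),\pi(y)\},\max\{\pi(x),\pi(y)\}]$, observe that a vertex-disjoint pair is separated with probability $1/3$, and conclude via the bound $n^2(2/3)^k<1$ using $2/\ln(3/2)<5$. The only cosmetic difference is packaging: the paper phrases the $1/3$ fact as Lemma~\ref{lem:line_upper_stronger} and then applies it greedily, peeling off a third of the remaining edges at each step, whereas you take $k$ independent random permutations and finish with a union bound---these are interchangeable, and you even note the greedy derandomization yourself.
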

Theorem~\ref{thm:line_upper} improves the bound of $\lfloor \frac{n}{2} \rfloor$ for a graph on $n$ vertices by Roberts~\cite{1969_Roberts} and refines the result presented in~\cite{2011_Chandran} for line graphs, specifically when the line graph has maximum degree $\Delta = o(\log(n))$.

It can be seen that the largest interval-order subgraph of $\overline{L(K_n)}$ contains one-third of the edges of $\overline{L(K_n)}$.
As the boxicity of a graph $G$ can be characterized as the minimum number of interval-order subgraphs of $\overline{G}$ required to cover $E(\overline{G})$ (Lemma~\ref{lem:Coz_Rob}), one could be tempted to conjecture that the boxicity of $L(K_n)$ is bounded by a constant.
However, we establish a lower bound for $\boxi(L(K_n))$ depending on a monotone increasing function of $n$.
Our proof relies on a lemma of Spencer, which has also been connected to the subject by Chandran, Mathew, and Sivadasan~\cite{2011_Chandran} in a different way.

\begin{theorem}\label{thm:lower_bound_box_line_graph}
For every $n \geq 5$, $\boxi(L(K_n)) \geq \lfloor \log_2 \log_2 (n - 1) \rfloor + 3$. %
\end{theorem}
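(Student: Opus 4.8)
The plan is to derive the bound from a classical lemma of Spencer on \emph{suitable sets of permutations}: a family of linear orders on a ground set $[m]$ is \emph{$3$-suitable} if for every three elements $x,y,z$ and every designated one of them, some order in the family places the designated element below the other two, and Spencer's lemma states that such a family must have size at least $\log_2\log_2 m$. The whole argument reduces a box representation of $L(K_n)$ to a $3$-suitable family on the $(n-1)$-element ground set $[n]\setminus\{1\}$ — which is exactly why the quantity $n-1$ surfaces in the statement.

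Concretely, I would fix a box representation of $L(K_n)$ in $\mathbb{R}^{k}$ with $k=\boxi(L(K_n))$ and single out vertex $1$ of $K_n$. The edges through $1$, i.e.\ the vertices $\{1,a\}$ of $L(K_n)$ for $a\in[n]\setminus\{1\}$, form a clique, so in each coordinate $s$ their intervals pairwise intersect and hence share a common point $p_s$ by the one-dimensional Helly property. For distinct $a,b\in[n]\setminus\{1\}$ the vertex $\{a,b\}$ is adjacent in $L(K_n)$ to exactly $\{1,a\}$ and $\{1,b\}$ among this clique and to no other $\{1,c\}$; hence for every $c\notin\{a,b\}$ the box of $\{a,b\}$ is disjoint from the box of $\{1,c\}$ in some coordinate $s$, while meeting the boxes of $\{1,a\}$ and $\{1,b\}$ in \emph{every} coordinate. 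Since the interval of $\{1,c\}$ in coordinate $s$ contains $p_s$, the interval of $\{a,b\}$ lies strictly to one side of $p_s$, yet on that side it must still reach the intervals of $\{1,a\}$ and $\{1,b\}$. Reading off endpoints, this forces $c$ to lie below both $a$ and $b$ in the order of coordinate $s$ given by right endpoints (if $\{a,b\}$ sits to the right of $p_s$) or by left endpoints reversed (if to the left).

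Collecting, for each coordinate, the order by right endpoints together with the reverse of the order by left endpoints, I obtain a family of linear orders on $[n]\setminus\{1\}$ in which, for every pair $\{a,b\}$ and every third element $c$, some order places $c$ below both $a$ and $b$. Letting $\{a,b\}$ and $c$ range over all choices, this says precisely that every element of every triple is the minimum in some order; that is, the family is $3$-suitable on the $(n-1)$-element set. Spencer's lemma then bounds its size from below by $\log_2\log_2(n-1)$, and translating back to $k$ yields the theorem.

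The main obstacle is quantitative rather than conceptual: the naive bookkeeping produces \emph{two} orders per coordinate (left and right endpoints), which would only give $k\ge\tfrac12\log_2\log_2(n-1)$. To reach the stated coefficient $1$ and the additive constant $3$, the crux is to charge each coordinate a single order — for instance by recording separations in one fixed direction and absorbing the opposite direction, or by phrasing Spencer's lemma directly for the $k$ interval-order subgraphs covering $E(\KG(n,2))$ supplied by Lemma~\ref{lem:Coz_Rob} — and then to invoke the sharp form of Spencer's bound together with the trivial base cases that pin down the additive term, so as to conclude $\boxi(L(K_n))\ge\lfloor\log_2\log_2(n-1)\rfloor+3$.
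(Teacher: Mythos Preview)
Your high-level strategy is right — reduce to Spencer's lemma on permutations — but the proposal contains a real gap that you yourself flag: your box-representation argument produces \emph{two} linear orders per coordinate (right endpoints and reversed left endpoints), and your suggestions for collapsing these to one are not worked out. ``Recording separations in one fixed direction'' does not obviously work, since for a given coordinate some pairs $\{a,b\}$ have their box to the right of $p_s$ and others to the left; and ``phrasing Spencer's lemma directly for the $k$ interval-order subgraphs'' is precisely the step that needs a nontrivial structural lemma you have not supplied. As written, your argument yields only $\boxi(L(K_n))\ge \tfrac12\log_2\log_2(n-1)+O(1)$.

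The paper closes this gap not by manipulating interval endpoints but by proving a structural fact about interval-order subgraphs of $\overline{L(G)}$ (Lemma~\ref{lem:maximal_interval_order_co_line_graph}): every maximal interval-order subgraph of $\overline{L(G)}$ equals $\overline{L(G)}^{\sigma}$ for some $\sigma\in\Sigma(\pi)$, where $\pi$ is a \emph{single} permutation of $V(G)$; moreover, the edge between disjoint $ab,cd\in E(G)$ lies in $\overline{L(G)}^{\sigma}$ iff $\{\pi(a),\pi(b)\}$ and $\{\pi(c),\pi(d)\}$ are nested intervals of $[n]$. Thus a $(k{+}1)$-interval-order cover of $\overline{L(K_n)}$ yields $k{+}1$ permutations of $V(K_n)$ (not $2(k{+}1)$, and on $n$ elements rather than $n-1$). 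Spencer's lemma (Lemma~\ref{lem:spencer}) with $n=2^{2^{k-1}}+1$ then produces a triple $v_a,v_b,v_c$ monotone in every $\pi_i$; in each $\pi_i$ the value $\pi_i(v_b)$ sits between $\pi_i(v_a)$ and $\pi_i(v_c)$, so for any fourth vertex $v_d$ the pair $(v_av_c,\,v_bv_d)$ is an edge of $\overline{L(K_n)}$ missed by every member of the cover. This gives $\boxi(L(K_n))\ge k+2=\lfloor\log_2\log_2(n-1)\rfloor+3$ directly, including the additive constant, with no side cases. The piece your proposal is missing is exactly Lemma~\ref{lem:maximal_interval_order_co_line_graph}.
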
 %

\medskip

The paper is structured as follows.
Section~\ref{sec:prelim} introduces the first,  classical properties that play a fundamental role in the study of boxicity. It also characterizes interval-order graphs using the orderings of their vertices.
Section~\ref{sec:interval_orders} focuses on the interval-order subgraphs of line graphs and shows that all interval-order subgraphs of the line graph of $K_n$ can be easily described using only five vertices of $K_n$.
Section~\ref{sec:petersen} presents the proofs of the main results concerning the complements of line graphs.
These can be applied to proving that the boxicity of the Petersen graph is $3$, a problem known before as a challenging open question; see, e.g.,~\cite{2014_Bruhn}.
More generally, it determines the boxicity of the Kneser graphs $\KG(n,2)$.
This latter proof can be generalized to solving the interval completion problem for complements of line graphs in polynomial time and detecting constant boxicity.
While it places boxicity within the complexity class $XP$ for complements of line graphs, it remains open whether this problem is solvable in polynomial time.
Section~\ref{sec:line_graphs} establishes new lower and upper bounds on the boxicity of line graphs.
In contrast to complements of line graphs, the problem of computing the boxicity of line graphs is not known to lie in the class $XP$.
Finally, Section~\ref{sec:conclusion} concludes with some applications, possible extensions, and open questions.

\section{Preliminaries}\label{sec:prelim}

Graphs are {\em simple} in this paper, that is, they do not have loops or parallel edges. 
Given a graph $G=(V, E)$ and a vertex $v \in V$, $\delta(v)$ denotes the edges incident to $v$ and $N(v)$ the vertices adjacent to $v$. The {\em degree} of a vertex $v$ is $d(v)\coloneqq|\delta(v)| = |N(v)|$. For $V' \subseteq V$, $G[V']$ is the \emph{induced subgraph} of $G$ with vertex-set $V'$,  and edge-set  $\{ uv \in E \ : \ u,v \in V'\}$.

\subsection{Defining the boxicity using interval graphs}\label{subsec:box_int}

An \emph{axis-parallel box} in $\mathbb{R}^d$ is a Cartesian product $I_1\times \dots \times I_d$ where each $I_i$ is a closed interval in the real line.
Axis-parallel boxes $B, B'$ intersect if and only if for every $i \in [d]$ the intervals $I_i$ and $I'_i$ intersect. 
This simple fact immediately implies the following lemma.
\begin{lemma}[Roberts~\cite{1969_Roberts}, 1969]\label{lem:Roberts}
	Let $G  = (V,E)$ be a graph. Then $\boxi(G) \leq k$ if and only if there are $k$ interval graphs $G_1, \ldots, G_k$ on the vertex set $V$ such that $E = \bigcap_{i=1}^k E(G_i)$.
\end{lemma}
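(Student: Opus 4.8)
The plan is to use directly the defining property of box intersection recalled just above the statement: two axis-parallel boxes $B = I_1 \times \cdots \times I_k$ and $B' = I'_1 \times \cdots \times I'_k$ in $\mathbb{R}^k$ meet if and only if $I_i \cap I'_i \neq \emptyset$ for \emph{every} coordinate $i \in [k]$. The two implications then amount, respectively, to slicing a box representation into its coordinate projections and to assembling coordinate interval representations back into boxes.

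For the forward direction, suppose $\boxi(G) \leq k$, so that $G$ is the intersection graph of a family $\{B_v : v \in V\}$ of boxes in $\mathbb{R}^d$ for some $d \leq k$. Writing $B_v = I^v_1 \times \cdots \times I^v_d$, I would fix a coordinate $i$ and let $G_i$ be the interval graph represented by the family $\{I^v_i : v \in V\}$; each $G_i$ is an interval graph by definition. By the intersection property, $uv \in E$ precisely when $I^u_i \cap I^v_i \neq \emptyset$ for all $i$, that is, when $uv \in E(G_i)$ for all $i$, giving $E = \bigcap_{i=1}^d E(G_i)$. To obtain exactly $k$ graphs when $d < k$, I would pad with $k-d$ copies of the interval graph in which every vertex is assigned the same interval $[0,1]$; this graph is $K_V$, whose edge set contains all pairs, so intersecting with it leaves $\bigcap_{i=1}^d E(G_i)$ unchanged, yielding $E = \bigcap_{i=1}^k E(G_i)$ with each $G_i$ an interval graph.

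For the converse, given interval graphs $G_1, \ldots, G_k$ on $V$ with $E = \bigcap_{i=1}^k E(G_i)$, I would take an interval representation $\{I^v_i : v \in V\}$ of each $G_i$ and assign to every vertex $v$ the box $B_v \coloneqq I^v_1 \times \cdots \times I^v_k \subseteq \mathbb{R}^k$. Applying the same intersection property in reverse, $B_u$ and $B_v$ meet if and only if $uv \in E(G_i)$ for every $i$, hence if and only if $uv \in E$; thus $G$ is the intersection graph of $\{B_v : v \in V\}$ in $\mathbb{R}^k$ and $\boxi(G) \leq k$.

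I do not anticipate a genuine obstacle, as the whole content is the elementary equivalence between ``box meets box'' and ``all coordinate intervals meet.'' The only point requiring a little care is the dimension bookkeeping in the forward direction — reconciling the minimality built into the definition of $\boxi(G)$ with the requirement to produce exactly $k$ interval graphs — which the padding-by-$K_V$ device resolves cleanly.
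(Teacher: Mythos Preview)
Your proposal is correct and follows precisely the route the paper indicates: the paper does not spell out a proof but merely states that the coordinate-wise intersection property of axis-parallel boxes ``immediately implies'' the lemma, and your two directions (projecting a box representation to coordinates, and assembling interval representations into boxes) are exactly the intended unpacking of that remark. The padding-by-$K_V$ step is a clean handling of the $d<k$ bookkeeping and is consistent with the paper's framing.
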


The interval graphs in Lemma~\ref{lem:Roberts} are interval completions of $G$.
Therefore, it is useful to explore the interval completions of a graph for deducing bounds on its boxicity.
By de Morgan's law,   a graph $G$ is the intersection of interval completions of $G$ if and only if its complement is the union of interval order subgraphs of $\overline{G}$.
This motivates the following definition:

Let $G=(V, E)$ be a graph. We say that the family of edge-sets  $\Cscr = \{C_1, \ldots,$ $C_k\}$ $(C_i\subseteq E)$ is a \emph{interval-order cover}, or a \emph{$k$-interval-order cover} of $E$ (or $G$) if each $(V, C_i)$ is an interval-order graph and $E = \bigcup_{i=1}^k C_i$.
\begin{lemma}[Cozzens and Roberts~\cite{1983_Cozzens}, 1983]\label{lem:Coz_Rob}
	Let $G = (V,E)$ be a graph. Then $\boxi(G) \leq k$ if and only if $\overline G$ has a  $k$-interval-order cover.
\end{lemma}

Note that by deleting or adding an edge, the boxicity may increase.
However, for any graph $G = (V,E)$ and $v \in V$,
$$\boxi(G[V \setminus \{v\}]) \leq \boxi(G) \leq \boxi(G[V \setminus \{v\}]) + 1.$$
The second inequality can be strengthened, as shown in the following result.
\begin{lemma}[Roberts~\cite{1969_Roberts}, 1969]\label{lem:box_minus_uv}
	Let $G  = (V, E)$ be a graph. If $u,v \in V$ and $uv \notin E$, then $\boxi(G) \leq \boxi(G[V \setminus \{u,v\}]) + 1$.
\end{lemma}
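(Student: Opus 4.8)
The plan is to invoke the characterization of Lemma~\ref{lem:Roberts}. Write $H \coloneqq G[V \setminus \{u,v\}]$ and $k \coloneqq \boxi(H)$, and fix $k$ interval completions $H_1, \ldots, H_k$ of $H$ with $\bigcap_{i=1}^k E(H_i) = E(H)$. From these I would build $k+1$ interval graphs on the full vertex set $V$ whose edge-intersection is exactly $E(G)$. The guiding idea is to let the first $k$ interval graphs handle all adjacencies \emph{within} $V \setminus \{u,v\}$, treating $u$ and $v$ as universal vertices there, and to reserve a single extra interval graph to impose the correct neighbourhoods of $u$ and $v$ and, crucially, to keep $u$ and $v$ non-adjacent.

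Concretely, for each $i$ I would fix an interval representation $w \mapsto I_i(w)$ of $H_i$ on $V \setminus \{u,v\}$ and extend it to $V$ by assigning to both $u$ and $v$ an interval so large that it meets every $I_i(w)$. This produces interval graphs $G_1, \ldots, G_k$ on $V$ in which $u$ and $v$ are adjacent to every other vertex and to each other, while the induced subgraph on $V \setminus \{u,v\}$ is precisely $H_i$. The remaining interval graph $G_{k+1}$ carries the real information about $u$ and $v$: I would place all vertices of $V \setminus \{u,v\}$ as intervals through a common point $0$, so that they pairwise intersect and thus form a clique; push $u$ far to the left and $v$ far to the right so that $u \not\sim v$; and then calibrate the left endpoint of each $w$-interval to reach $u$ exactly when $w \in N_G(u)$, and its right endpoint to reach $v$ exactly when $w \in N_G(v)$.

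With these $k+1$ interval graphs in hand, verifying $\bigcap_{i=1}^{k+1} E(G_i) = E(G)$ splits into three routine cases according to the endpoints involved. Pairs inside $V \setminus \{u,v\}$ are filtered by $G_1, \ldots, G_k$ alone (they form a clique in $G_{k+1}$) and recover exactly $E(H) = E(G[V \setminus \{u,v\}])$; pairs $uw$ or $vw$ pass $G_1, \ldots, G_k$ automatically and are decided by $G_{k+1}$ to be the edges into $N_G(u)$, respectively $N_G(v)$; and the pair $uv$, adjacent in $G_1, \ldots, G_k$, is broken precisely by $G_{k+1}$, matching $uv \notin E$. Lemma~\ref{lem:Roberts} then yields $\boxi(G) \le k+1 = \boxi(H) + 1$.

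The only delicate point — and the place where the hypothesis $uv \notin E$ is used — is the construction of $G_{k+1}$: a single interval graph must simultaneously realize the two prescribed neighbourhoods, keep $V \setminus \{u,v\}$ complete, and keep $u$ and $v$ non-adjacent. Placing $u$ and $v$ on opposite sides of the common clique point achieves all three at once, and this is exactly what lets us reinsert \emph{both} $u$ and $v$ at the cost of one extra dimension, improving on the trivial bound $\boxi(H) + 2$ obtained by reinserting them one at a time.
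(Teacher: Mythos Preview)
The paper does not actually prove this lemma; it is stated with attribution to Roberts~\cite{1969_Roberts} and used as a black box. Your argument is the standard one and is correct: extend each of the $k$ interval representations of $H$ by making $u$ and $v$ universal, and add one further interval graph in which $V\setminus\{u,v\}$ is a clique placed around a common point, with $u$ and $v$ on opposite sides so that their adjacencies to $V\setminus\{u,v\}$ are controlled independently and $uv$ is absent. The verification by cases is exactly right, and the role of the hypothesis $uv\notin E$ is identified correctly. One tiny remark: when $k=0$ (i.e.\ $H$ is complete) the family $G_1,\ldots,G_k$ is empty and the intersection convention gives the complete graph on $V$, so $G_{k+1}$ alone must equal $G$; your construction still works in this degenerate case.
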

We can easily obtain the following classical result by induction on the number of vertices of the graph.
\begin{theorem}[Roberts~\cite{1969_Roberts}, 1969]\label{thm:Roberts}
Let $G  = (V, E)$ be a graph on $n$ vertices. Then $\boxi(G) \leq \lfloor \frac{n}{2} \rfloor$.
\end{theorem}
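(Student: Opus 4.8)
The plan is to prove Theorem~\ref{thm:Roberts} by induction on the number of vertices $n$, using the single-vertex-pair reduction provided by Lemma~\ref{lem:box_minus_uv}. The base cases are $n\le 2$: any graph on at most two vertices is either complete (boxicity $0$) or a single non-edge (an interval graph, boxicity $\le 1$), so $\boxi(G)\le \lfloor n/2\rfloor$ holds trivially. For the inductive step with $n\ge 3$, the goal is to remove \emph{two} vertices at once so that the floor function decreases by exactly $1$, matching the ``$+1$'' in Lemma~\ref{lem:box_minus_uv}: indeed $\lfloor n/2\rfloor = \lfloor (n-2)/2\rfloor + 1$.

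The key case distinction is whether $G$ is a complete graph. If $G=K_n$, then $\boxi(G)=0\le\lfloor n/2\rfloor$ and there is nothing to prove. Otherwise $G$ has at least one non-adjacent pair $u,v$ with $uv\notin E$. Applying Lemma~\ref{lem:box_minus_uv} to this pair gives
$$\boxi(G)\le \boxi\bigl(G[V\setminus\{u,v\}]\bigr)+1.$$
The graph $G[V\setminus\{u,v\}]$ has $n-2$ vertices, so the induction hypothesis yields $\boxi(G[V\setminus\{u,v\}])\le \lfloor (n-2)/2\rfloor$. Combining the two inequalities gives
$$\boxi(G)\le \left\lfloor \frac{n-2}{2}\right\rfloor + 1 = \left\lfloor \frac{n}{2}\right\rfloor,$$
which completes the inductive step and hence the proof.

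The only subtlety worth flagging is the existence of a non-adjacent pair in the non-complete case; this is immediate from the definition of a complete graph, so there is no real obstacle. One should simply make sure the induction is set up to descend by two vertices rather than one, since the naive bound $\boxi(G)\le \boxi(G[V\setminus\{v\}])+1$ from deleting a \emph{single} vertex would only give $\boxi(G)\le n-1$ or similar, far weaker than $\lfloor n/2\rfloor$. The strength of the argument rests entirely on Lemma~\ref{lem:box_minus_uv}, which lets a \emph{non-edge} absorb two vertices into a single extra dimension, and this is exactly why the complete-graph case must be separated out (it has no non-edge to exploit, but there it needs no dimension at all).
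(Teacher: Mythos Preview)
Your proof is correct and follows exactly the approach the paper indicates: induction on $n$ using Lemma~\ref{lem:box_minus_uv} to remove a non-adjacent pair at each step. The paper does not spell out the details beyond ``by induction on the number of vertices,'' but your write-up is precisely the intended argument.
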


\subsection{Defining interval-order graphs using orderings on their vertices}\label{subsec:ordering}
%
Given a family $\Iscr$ of $n$ closed intervals in $\mathbb{R}$,  order them in a non-decreasing order $\sigma= (I_1, \ldots, I_n)$ of their right endpoints, and consider the interval-order graph $G=(V, E)$, and  $V=\{v_1,\ldots, v_n\}$, where, for $i \in [n]$, $v_i$ corresponds to $I_i$, and two vertices are joined if and only if the corresponding intervals are disjoint (the complement of their intersection graph).
Orienting the edges of $G$ from the vertices of smaller  index towards those of larger index, we have:
\begin{equation}\label{eq:chain_property}    
	\textit{if $i > j$, then $N^+(v_i) \subseteq N^+(v_j)$,}
\end{equation}
where the \emph{out-neighbourhood} of a vertex $v_l \in V(G)$ is the set $N^+(v_l) \coloneqq \{v_m : v_lv_m  \in E(G),\ l < m\}$.

Clearly, the series of the sizes of $N^+(v_i)$ for $i \in [n]$ is (not necessarily strictly) monotone decreasing from $d(v_1)$ to $0$.
In addition, {\em in  the  chain  $N^+(V_{1})\supseteq\ldots \supseteq N^+(V_{n})$ only the first at most $D$ sets are not empty, where $D$ is the maximum degree of $G$.} Indeed, suppose  $N^+(V_{D+1}) \ne \emptyset$, and let $u\in N^+(V_{D+1})$. Then by (\ref{eq:chain_property}), $u$ is also in the neighbourhood of all previous vertices, that is,  $N_G(u)\supseteq \{v_1,\ldots, v_{D + 1}\}$, contradicting that $D$ is the maximum degree.

By necessity, one realizes that (\ref{eq:chain_property})  actually  {\em characterizes} interval-order graphs (the proof is immediate by induction).  It is explicitly stated in Olariu's paper~\cite{1991_Olariu}:

\begin{lemma}\label{lem:co_intervals} 
	A graph $G=(V, E)$ is an interval-order graph if and only if $V$ has an ordering  $(v_1,\ldots, v_n)$ so that (\ref{eq:chain_property}) holds. 
\end{lemma}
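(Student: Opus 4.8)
The plan is to prove both implications of Lemma~\ref{lem:co_intervals}, treating the forward direction (that an interval-order graph admits such an ordering) as essentially settled by the discussion preceding the statement, and concentrating on a clean constructive proof of the converse.

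For the forward direction I would take any family of closed intervals whose intersection graph is $\overline G$, list the vertices $v_1,\dots,v_n$ by non-decreasing right endpoints of their intervals, and orient each edge from the smaller to the larger index. If $i<j$ and $v_j\in N^+(v_i)$, then $I_i\cap I_j=\emptyset$ while $r(I_i)\le r(I_j)$, which forces $I_j$ to lie strictly to the right of $I_i$; hence for any $h<i$ we have $r(I_h)\le r(I_i)<\ell(I_j)$, so $I_h\cap I_j=\emptyset$ and, since $h<j$, $v_j\in N^+(v_h)$. This is exactly $N^+(v_i)\subseteq N^+(v_h)$ for $h<i$, i.e.\ property~(\ref{eq:chain_property}).

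For the converse I would exhibit an explicit interval representation rather than argue abstractly. The one fact I would isolate first is a direct consequence of~(\ref{eq:chain_property}): for every index $j$, the set $\{\,i<j : v_j\in N^+(v_i)\,\}$ of in-neighbours of $v_j$ is a prefix $\{1,\dots,p(j)\}$ of the ordering (with $p(j)=0$ when $v_j$ has no in-neighbour), because $v_j\in N^+(v_i)$ and $i'<i$ give $v_j\in N^+(v_{i'})$ by nestedness. With this in hand I would set $r_i\coloneqq i$ and $\ell_i\coloneqq p(i)+\tfrac12$, and define $I_i\coloneqq[\ell_i,r_i]$; since $\ell_i\le i-\tfrac12<r_i$, each $I_i$ is a genuine interval. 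For $i<j$ we have $r_i<r_j$, so $I_i\cap I_j=\emptyset$ iff $\ell_j>r_i$, i.e.\ iff $p(j)+\tfrac12>i$, i.e.\ iff $i\le p(j)$, which by the prefix property holds iff $v_j\in N^+(v_i)$, i.e.\ iff $v_iv_j\in E$. Thus adjacency coincides with disjointness, so $(V,E)$ is the complement of the intersection graph of the $I_i$, i.e.\ an interval-order graph.

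The only real subtlety—and the step I would treat most carefully—is the prefix claim: it is precisely here that hypothesis~(\ref{eq:chain_property}) is used, and without it the in-neighbour sets need not be downward closed, so the half-integer left endpoints could create spurious intersections. Everything else is bookkeeping. I note that the same construction can be read as the induction the statement alludes to: appending the last vertex $v_n$ with interval $[\,p(n)+\tfrac12,\ n\,]$ leaves every previously built interval untouched and reproduces exactly its edges to $v_1,\dots,v_{n-1}$, so the representation extends one vertex at a time.
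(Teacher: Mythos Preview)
Your proof is correct. The forward direction matches the paper's discussion preceding the lemma, and your converse is a clean explicit construction: the prefix claim follows directly from~(\ref{eq:chain_property}), and the intervals $I_i=[\,p(i)+\tfrac12,\,i\,]$ do exactly the job.

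The paper does not actually prove this lemma; it only remarks that the characterization is ``immediate by induction'' and attributes the statement to Olariu~\cite{1991_Olariu}. Your approach is therefore more explicit than what the paper offers: you produce concrete intervals rather than appeal to an unstated inductive argument. As you yourself observe, the two are really the same proof in different clothing---your final paragraph makes the inductive reading explicit, appending $v_n$ with interval $[\,p(n)+\tfrac12,\,n\,]$ without disturbing the earlier representation. The benefit of your version is that it certifies the lemma self-containedly and makes the interval model tangible; the paper's version has the virtue of brevity, since the result is classical.
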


Inspired by this characterization, we can easily construct interval-order subgraphs of an arbitrary graph $G = (V,E)$ starting from permutations of $V$.
To show how, we introduce the following construction:

Let $G=(V,E)$ be a graph and $\sigma$ an ordering of $V$. We define the graph $G^\sigma=(V,E^\sigma)$ as follows: let $V_0\coloneqq V$, $V_i \coloneqq V_{i-1}\cap N_G (v_i)$ for $i \in [n]$, and $E^\sigma\coloneqq E_1 \cup \ldots \cup E_{n} \subseteq E$ where $E_i$ is the set of edges from $v_i$ to $V_i$; see Fig.~\ref{fig:example_G_sigma} for an example.
Note that the set $V_i$\,--\,that is eventually becoming $N^+(v_i)$  and this is the notation we will use for it\,--\,depends only on the graph $G$ and the  {\em $i$-prefix}  $\sigma_i\coloneqq(v_1, \ldots, v_i)$ of $\sigma$. 

If  $N^+(v_{i+1})=\emptyset$, then $E_j$ for $j> i$ does not add   any more edges to $E^\sigma$, so the {\em $i$-suffix} $(v_{i+1},\ldots,v_n)$ of 
$\sigma$ may then be deleted or remain undefined.

\begin{figure}[ht]
    \centering
    \includegraphics[scale=0.63]{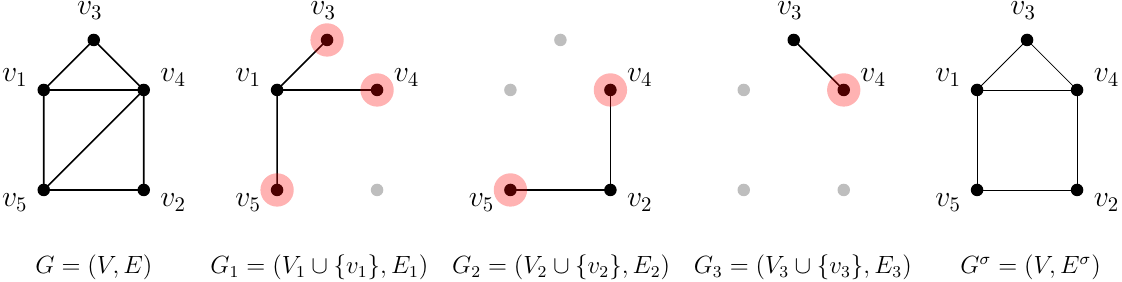}
    \caption{A graph $G$ with  an ordering $\sigma : = (v_1, \ldots, v_5)$ of $V(G)$, the corresponding $G_i$ for $i \in [3]$, and  $G^\sigma$. For each $G_i$ ($i \in [3]$), the vertices of $V_i$ are marked with red disks. Note that $v_4$, the only vertex of $V_3$, is not contained in $N_G(v_4)$, so  $E_4 = \emptyset$.}
    \label{fig:example_G_sigma}
\end{figure}
We refer to an interval-order subgraph of a graph $G$ as \emph{maximal} if it is inclusion-wise maximal with respect to its edge-set.
\begin{corollary}\label{cor:co_intervals} Let $G=(V,E)$ be a graph. Then, for any ordering $\sigma$ of $V$, $G^\sigma$ is an interval-order subgraph of $G$.  Conversely, any maximal interval-order subgraph of $G$ is  $G^\sigma$ for some ordering $\sigma$ of $V$. 
\end{corollary}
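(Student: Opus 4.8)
The plan is to prove the two directions separately, in both cases leaning on the characterisation of interval-order graphs via property~(\ref{eq:chain_property}) (Lemma~\ref{lem:co_intervals}). For the forward direction I would first unfold the recursion $V_i = V_{i-1}\cap N_G(v_i)$ into the explicit form $V_i = N_G(v_1)\cap\dots\cap N_G(v_i)$. Two observations then do all the work. Since $G$ is loop-free, no $v_m$ with $m\le i$ can lie in $N_G(v_m)$, so $V_i\subseteq\{v_{i+1},\dots,v_n\}$; as $E_i$ consists exactly of the edges from $v_i$ to $V_i$, orienting $E^\sigma$ from smaller to larger index yields that the out-neighbourhood of $v_i$ in $G^\sigma$ is precisely $V_i$ (an edge of some $E_k$ incident to $v_i$ either has $k=i$, giving a larger-indexed endpoint in $V_i$, or has $v_i\in V_k$ with $k<i$, in which case it is an out-edge of $v_k$, not of $v_i$). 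Second, the nested form of the intersections gives $V_i\subseteq V_j$ whenever $i>j$. Together these say that $\sigma$ witnesses property~(\ref{eq:chain_property}) for $G^\sigma$, so Lemma~\ref{lem:co_intervals} certifies $G^\sigma$ as an interval-order graph; and $E^\sigma\subseteq E$ by construction, making it a subgraph of $G$.

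For the converse, let $H=(V,F)$ be a maximal interval-order subgraph of $G$. By Lemma~\ref{lem:co_intervals}, $H$ admits an ordering $\sigma=(v_1,\dots,v_n)$ satisfying~(\ref{eq:chain_property}), and I would build $G^\sigma$ from this very $\sigma$. The first step is to show $F\subseteq E^\sigma$: given an edge $v_jv_m\in F$ with $j<m$, property~(\ref{eq:chain_property}) applied down the chain forces $v_m\in N^+_H(v_l)$, hence $v_lv_m\in F\subseteq E$, for every $l\le j$; thus $v_m\in\bigcap_{l\le j}N_G(v_l)=V_j$, so $v_jv_m\in E_j\subseteq E^\sigma$. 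The second step invokes the forward direction: $G^\sigma$ is an interval-order subgraph of $G$ with $F\subseteq E^\sigma\subseteq E$, so maximality of $H$ forces $F=E^\sigma$, i.e. $H=G^\sigma$.

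The individual steps are short, and the only genuine subtlety lies in the converse. It is tempting, but wrong, to try to prove $E^\sigma\subseteq F$ directly from the interval-order structure of $H$ alone: the extra edges of $E^\sigma$ need not belong to an arbitrary interval-order subgraph, so no such inclusion holds in general. The correct move is to extract only $F\subseteq E^\sigma$ from the chain property and then let maximality of $H$ (together with the already-established fact that $G^\sigma$ is an interval-order subgraph) upgrade this to equality. The other place requiring care is the loop-free observation $V_i\subseteq\{v_{i+1},\dots,v_n\}$ together with the orientation-by-index bookkeeping, as this is exactly what makes the identification $N^+(v_i)=V_i$ — and hence the verification of~(\ref{eq:chain_property}) for $G^\sigma$ — go through cleanly.
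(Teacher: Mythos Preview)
Your proof is correct and follows the same approach as the paper's own proof; you simply supply the details that the paper leaves implicit, namely the verification that $N^+(v_i)=V_i$ in $G^\sigma$ so that~(\ref{eq:chain_property}) holds, and the explicit argument for $F\subseteq E^\sigma$ in the converse before invoking maximality.
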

\begin{proof}   
    First, we show that $G^\sigma = (V, E^\sigma)$ is an interval order.
    By construction, the ordering $\sigma$ of $V$ ensures that (\ref{eq:chain_property}) is satisfied.
    Thus, the first part of the corollary follows directly from Lemma~\ref{lem:co_intervals}.
    For the converse, let $H = (V, F)$ be a maximal interval-order graph if $G$.
    By the reverse implication of Lemma~\ref{lem:co_intervals}, there exists an ordering $\sigma$ satisfying (\ref{eq:chain_property}) (for $H$).
    As $F \subseteq E^\sigma$, by the maximality of $H$, we have $H = G^\sigma$.
    \qed 
\end{proof}

The following lemma helps to reduce the number of orderings of the vertex set to consider for generating maximal interval-order subgraphs.
\begin{lemma}\label{lem:det}  
Let $G=(V,E)$ be a graph, $G^\sigma$ a maximal interval-order subgraph of $G$, 
and $\sigma_i = (v_1, \ldots, v_i)$ an $i$-prefix of $\sigma$. Then:
\begin{itemize}
\item[(i)] For $u, v\in V \setminus \{v_1, \ldots, v_i\}$, if $N(u)\supseteq N(v)\cap N^+(v_i)$, then $G^\sigma=G^{\sigma'}$, where $\sigma'$ is derived from $\sigma$ by possibly swapping $u$ and $v$, so that $u$ precedes $v$.
\item[(ii)] In each  ordering $\sigma'$ with  prefix $\sigma_i$ such that $G^\sigma=G^{\sigma'}$, $\sigma_i$ is followed by all $N_i \coloneqq \{v \in V \setminus \{v_1, \ldots, v_i\} :  N(v)\supseteq N^+(v_i)\}$ in arbitrary order.
\item[(iii)]
If $N^+(v_i)= \{u,v\}$, then $\sigma_i$ is immediately followed by $(N(u) \cap N(v)) \setminus \{v_1, \ldots, v_i\}$ and then by the remaining vertices   of $N(u)$ or of $N(v)$.
\end{itemize}
\end{lemma}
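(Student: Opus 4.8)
The plan is to view every ordering through a single clean rule for edge membership. Since $N^+(v_j)=V_j=\bigcap_{k\le j}N(v_k)$ depends only on the \emph{set} $\{v_1,\dots,v_j\}$, an edge $\{a,b\}$ with $a$ preceding $b$ in $\sigma$ lies in $E^\sigma$ if and only if $ab\in E$ and every vertex preceding $a$ is adjacent to $b$ (equivalently $b\in N^+(v_{\text{pos}(a)})$). I would record this reformulation first, since it makes the effect of a transposition on $E^\sigma$ completely transparent and reduces all three parts to short incidence computations combined with the maximality of $G^\sigma$ supplied by Corollary~\ref{cor:co_intervals}.

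For (i), writing $W:=N^+(v_i)$, I may assume $v$ precedes $u$ in $\sigma$ (otherwise $\sigma'=\sigma$ and there is nothing to prove). In the transparent case where $u$ and $v$ occupy the two positions immediately after $\sigma_i$, I compare $E^\sigma$ and $E^{\sigma'}$ edge by edge. Edges avoiding $\{u,v\}$ and edges incident to $v$ are unchanged: the hypothesis $N(v)\cap W\subseteq N(u)$ says that every later neighbour of $v$ lying in the common neighbourhood $W$ of the prefix is also a neighbour of $u$, so no $v$-edge is lost when $v$ is pushed to the later position. The edge $uv$ cannot belong to $E^\sigma$ at all, because $uv\in E^\sigma$ with $v$ before $u$ would force $u\in N(v)\cap W\subseteq N(u)$, a loop. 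Hence the only possible change is the appearance of \emph{extra} edges at $u$, giving $E^\sigma\subseteq E^{\sigma'}$; as $G^{\sigma'}$ is an interval-order subgraph of $G$ while $G^\sigma$ is maximal, this inclusion is forced to be the equality $E^\sigma=E^{\sigma'}$.

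Parts (ii) and (iii) would then be read off the same rule. For (ii), note that $N_i$ is exactly the set of post-prefix vertices adjacent to all of $W$; applying (i) repeatedly lets me pull every vertex of $N_i$ to the positions right after $\sigma_i$, and conversely, if some $x\notin N_i$ preceded some $y\in N_i$ in an admissible ordering, then at the step processing $x$ the surviving set drops below $W$ (it loses a $w\in W\setminus N(x)$), deleting an edge that is present whenever $N_i$ comes first, contradicting $G^{\sigma'}=G^\sigma$. For (iii), when $W=\{u,v\}$ keeping both $u$ and $v$ alive as out-neighbours forces every immediate successor of $\sigma_i$ to lie in $N(u)\cap N(v)$; once these are exhausted the surviving set shrinks to a single vertex, so the remainder must consist of the leftover neighbours of $u$ or of $v$, which (i) and (ii) pin down. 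I expect the main obstacle to be part (i) when $u$ and $v$ are far apart: the vertices between them alter predecessor sets, so one must argue, using the maximality of $G^\sigma$ and, if needed, a chain of intermediate reorderings, that no edge of $E^\sigma$ is actually lost --- the clean set identity $E^\sigma\subseteq E^{\sigma'}$ holds directly only in the adjacent case. The same subtlety, namely establishing the structure for \emph{every} admissible ordering rather than merely exhibiting one, is the delicate point in (ii) and (iii).
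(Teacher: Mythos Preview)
Your approach is essentially the paper's: swap, compare out-neighbourhoods, and invoke maximality via Corollary~\ref{cor:co_intervals}. The edge-membership reformulation you record (an edge lies in $E^\sigma$ iff the later endpoint is adjacent to every predecessor of the earlier one) is a useful clarification that the paper leaves implicit.

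Your concern about part~(i) when $u$ and $v$ are not in consecutive positions is well-founded. The paper's proof of (i) is a single sentence (``swapping $u$ and $v$, the condition makes sure that neither $N^+(u)$ nor $N^+(v)$ decrease''), and it likewise does not treat the vertices sitting strictly between $u$ and $v$, whose out-neighbourhoods can also change under the swap; so both arguments share this gap rather than you having missed something the paper supplies.

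One difference worth adopting: for (ii) the paper does \emph{not} go through (i). It moves all of $N_i$ immediately after $\sigma_i$ in one stroke and checks directly that every $N^+(v_j)$ is replaced by a superset. The point is that the only new predecessors any vertex acquires are elements of $N_i$, whose neighbourhoods each contain $W=N^+(v_i)$; since every $N^+(v_j)$ after the prefix is already a subset of $W$, intersecting with these extra neighbourhoods changes nothing for non-$N_i$ vertices, while for each $y\in N_i$ the out-neighbourhood becomes exactly $W$, strictly larger whenever some $x\notin N_i$ preceded $y$ in $\sigma$. This yields $E^{\sigma}\subsetneq E^{\sigma''}$ and contradicts maximality directly. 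Since (iii) then follows from (ii) as both you and the paper outline, this direct route lets you avoid relying on the delicate general-position swap in (i) altogether.
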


Note that $N_i$ is exactly the set of elements electable to be added to $\sigma_i$ as $v_{i+1}$ so that $N^+(v_{i+1})= N^+(v_{i})$.
\begin{proof}[of Lemma \ref{lem:det}]
    To check $(i)$, note that swapping $u$ and $v$, the condition of $(i)$ makes sure that neither $N^+(u)$ nor $N^+(v)$ decrease. 

   We check now $(ii)$. If $\sigma_i$ is not immediately followed by the vertices of $N_i$, modify $\sigma$ by moving all the vertices of $N_i$ immediately after $v_i$, in arbitrary order.
    Clearly, each $N^+(v_i)$ is replaced by a superset, and at least one of them by a proper one: $E^\sigma$ increases, contradicting maximality.
	
	To prove $(iii)$, denote by  $N_u$, $N_v$, and $N_{u,v}$ the vertices in $V\setminus \{v_1, \ldots, v_i\}$ that are adjacent only to $u$, only to $v$,  or to both, respectively.
    Observe that $N_{u,v} = N_i$, so, by $(ii)$, $\sigma_i$ is immediately followed by $N_{u,v}$.
    Then, by $(i)$, either an element of $N_u$ or an element of $N_v$ follows, unless both are empty. 
	Finally, applying $(ii)$ again, the remaining vertices of $N_u$ or of $N_v$ must follow. 
\qed 
\end{proof}

\section{Interval orders in line graphs}\label{sec:interval_orders}
In this section, we characterize the maximal interval-order subgraphs of $L(K_n)$.
This characterization is crucial for computing the boxicity of $\KG(n, 2)$ (Theorem~\ref{thm:box_kneser}), addressing the IGC for complements of line graphs (Lemma~\ref{lem:all_minimal_completion} and Theorem~\ref{thm:minimum_completion}), and determining the boxicity of complements of line graphs (Theorem~\ref{thm:box_co_line}).
Solving these problems requires refined insight into the interval completions of the considered graph (or equivalently, into the interval orders of its complement) already for graphs of small order, such as the Petersen graph.

To describe the edge-sets of the maximal interval-order subgraph of $L(K_n)$, we denote by $(e,f)$  the edge of $L(K_n)$ between two incident edges $e,f\in E(K_n)$ and, for simplicity, borrow notation from $K_n$: 
\begin{itemize}
    \item[-]
    for a vertex $v\in V(K_n)$, $Q_v$  denotes the set of $\frac{(n-1)(n-2)}2$ edges of the clique formed in $L(K_n)$ by the $n-1$ edges of $\delta(v) \subseteq E(K_n)$ as vertices of $L(K_n)$;
    \item[-]
    for an edge $e=uv\in E(K_n)$, $\delta_{uv}$ denotes the set of edges $ef\in E(L(K_n))$, where $f\in V(L(K_n))$ is incident to $u$ or $v$ in $K_n$, that is, $\delta_{uv}$ is the edge-set of the star of centre $e=uv$ in $L(K_n)$, $|\delta_{uv}|= 2(n-2)$;
    the set $\delta_{uv^-}$ consists only of the edges $ef$, where $f$ is incident to $u$ in $K_n$, $|\delta_{uv^-}|= n-2$;
    \item[-]
    for a set $U\subseteq V(K_n)$, $K_U$ denotes the edge-set of $L(K_n[U])$; $K_{u,v,w^-}\coloneqq\{(uv,uw), (uv,vw)\}$.  
\end{itemize}
Now, we can state our characterization.

\begin{lemma}\label{lem:maximal_co_interval} 
For any $n\geq5$ and any maximal interval-order subgraph $H$ of $L(K_n)$, there exist distinct vertices $a, b, c, d, e \in V(K_n)$
such that $E(H)$ corresponds to one of the following subsets of $E(L(K_n))$:
\begin{align}
    E_{a,b,c,d,e} &= Q_a\cup \delta_{ab}\cup \delta_{ad}\cup K_{\{a,b,c\}}\cup K_{\{a,d,e\}},  \label{eq:typeA} \tag{a} \\
    E_{a,b,c,d} &=\delta_{ab}\cup \delta_{ad}\cup K_{\{a,b,c,d\}}, \label{eq:typeB} \tag{b} \\
    F_{a,b,c,d} &= \delta_{ab} \cup \delta_{ad}\cup  \delta_{ac^-}\cup K_{\{a,b,c\}}\cup K_{\{a,b,d\}}\cup K_{a,d,c^-}\cup K_{b,c,d^-} \label{eq:typeC} \tag{c}\\
        & \textrm{or $F'_{a,b,c,d}$, where } \delta_{ad} \textrm{ is replaced by } \delta_{bc}. \notag
\end{align}
In particular,
\[|E_{a,b,c,d,e}|= \frac{(n+2)(n-1)}2,\, |E_{a,b,c,d}|= 4(n-1),\, |F_{a,b,c,d}|= |F'_{a,b,c,d}|= 5(n-2).\]
\end{lemma}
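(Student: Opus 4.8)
The plan is to apply Corollary~\ref{cor:co_intervals}: every maximal interval-order subgraph $H$ of $L(K_n)$ is of the form $L(K_n)^\sigma$ for some ordering $\sigma=(f_1,f_2,\dots)$ of the vertices of $L(K_n)$, that is, of the edges of $K_n$. The whole construction is driven by the decreasing chain $N^+(f_1)\supseteq N^+(f_2)\supseteq\cdots$, and $E^\sigma=\bigcup_i E_i$ stops growing once this chain reaches $\emptyset$, so only the prefix of $\sigma$ up to the first empty out-neighbourhood is relevant. The first step is to translate this chain into the language of $K_n$: since two vertices of $L(K_n)$ are adjacent exactly when the corresponding edges of $K_n$ share an endpoint, $N^+(f_i)=\bigcap_{j\le i}N_{L(K_n)}(f_j)$ is precisely the set of edges of $K_n$ meeting every one of $f_1,\dots,f_i$, and $E_i$ joins $f_i$ to this set.

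The key structural input is the classical fact that a family of pairwise-intersecting edges of $K_n$ is either a \emph{star} (all through one common vertex) or a \emph{triangle}. I will use it to show that the chain of out-neighbourhoods collapses after a bounded number of steps: as soon as three among $f_1,\dots,f_i$ do not pass through a single common vertex, the set of edges meeting all of them has size at most two. At that moment Lemma~\ref{lem:det}(iii) applies and fixes the entire remainder of $\sigma$\,--\,hence all remaining $E_j$\,--\,up to symmetry. This is what forces $H$ to be determined by only a few of the $f_j$, and ultimately by at most five vertices of $K_n$.

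With this in hand the proof becomes a finite case analysis on the first edges of $\sigma$, steered by Lemma~\ref{lem:det}(ii)--(iii) to pin down the order of the forced blocks. Writing $f_1=ab$, one has $E_1=\delta_{ab}$ and $N^+(f_1)=(\delta(a)\cup\delta(b))\setminus\{ab\}$; I then branch on how the listing proceeds. If the listed edges keep sharing the vertex $a$, the out-neighbourhood stabilises to the remaining edges of $\delta(a)$, the full clique $Q_a$ on $\delta(a)$ is generated, and the two ``side excursions'' towards $b$ and towards a further neighbour $d$ (closing the triangles on $\{a,b,c\}$ and $\{a,d,e\}$) produce type~(a). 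If only two edges run through $a$ before the order is forced into the line graph of a $K_4$, one obtains type~(b); and the branch in which the collapse happens through a triangle yields type~(c) together with its mirror image $F'$. In each branch I compute $N^+(f_i)$ directly to read off $E_i$ and then take the union of the blocks.

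Finally, the cardinality formulas follow by counting the edges in each block while accounting for their overlaps (for instance $\delta_{ab}$ and $Q_a$ share exactly the $n-2$ edges $(ab,ax)$, and each $K_{\{a,\cdot,\cdot\}}$ block contributes only one genuinely new edge). I expect the main obstacle to be the completeness and bookkeeping of the case analysis: verifying that \emph{every} ordering $\sigma$ lands in one of the three families, correctly treating the small-order degeneracies\,--\,most delicately $n=5$, where the out-neighbourhood can shrink to size two or three unusually early\,--\,and controlling the overlaps between the $Q_a$, $\delta$, and $K$ blocks so that no maximal interval-order subgraph is overlooked or double counted.
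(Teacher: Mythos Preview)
Your plan is correct and follows essentially the same route as the paper: start from Corollary~\ref{cor:co_intervals}, read the chain $N^+(f_i)$ as ``edges of $K_n$ meeting all of $f_1,\dots,f_i$'', observe that once three of the $f_j$ fail to share a common vertex this set has size at most two so Lemma~\ref{lem:det}(ii)--(iii) pins down the rest, and then run the finite case analysis on the early prefix of $\sigma$. One small caution on your informal branch-to-type map: type~(b) actually arises from the matching start $f_1=ab$, $f_2=cd$ (not from ``two edges through $a$ then a $K_4$''), and the triangle start $f_1,f_2,f_3=ab,ac,bc$ gives $N^+(f_3)=\emptyset$ and is \emph{non}-maximal; type~(c) comes instead from the path configurations --- but this will straighten itself out once you execute the cases.
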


\begin{proof}
Let $H$ be a maximal interval-order subgraph of $L(K_n)$.
By Lemma~\ref{lem:co_intervals}, there is an ordering $\sigma = (v_1, \ldots, v_{m})$ of $V(L(K_n)) = E(K_n)$ so that $E(H) = E^\sigma$.
We prove that $E^\sigma$ has one of the forms (\ref{eq:typeA}), (\ref{eq:typeB}), or (\ref{eq:typeC}).

Let  $v_1\coloneqq ab\in E(K_n)$ be the first vertex in $\sigma$,  and $i(\sigma)$  be the first index of $\sigma$ such that $v_i$, as an edge of $K_n$, is not incident to $a$.
Clearly, $i(\sigma) \leq n$.
%

\medskip

\noindent {\bf Case 1}: $i(\sigma)\ge 4$, that is, $v_1, v_2, v_3\in E(K_n)$ are incident to  $a$ (Fig.~\ref{fig:max_int_Case1}).

Denote the two endpoints of $v_{i(\sigma)}$ by $d$ and $e$.

\begin{claim}
    $i(\sigma)=n-2$ and, in particular, $N^+(v_{i(\sigma)})=\{ad,ae\}$.
\end{claim}
Indeed, if $i(\sigma)\ge n-2$, then $N^+(v_{n-3}) = \{ad', ae'\}$ contains only the two edges in $E(K_n)$ that are incident to $a$ but not yet selected in $\sigma$.
    By Lemma~\ref{lem:det}~$(ii)$, $\sigma_{n-3}$ must continue with $v_{n-2} = d'e'$ implying that $i(\sigma)\leq n-2$, $\{d, e\} = \{d', e'\}$, and $N^+(v_{n-2})=\{ad,ae\}$.

    For each edge $af$ (if any), different from $v_1,\ldots, v_{i(\sigma)-1}$ and from $ad, ae$, the neighbourhood $N(af)$ contains $N(de) \cap N^+(v_{i(\sigma)-1})$.
    Hence, applying Lemma~\ref{lem:det}~$(i)$, each edge of this type precedes $de$ and $i(\sigma) = n-2$.
    This concludes the proof of the claim.

	\begin{figure}[ht]
		\centering
		\includegraphics[scale=0.75]{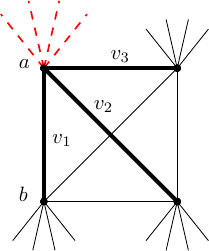}
		\caption{The edges of $V(K_n)$ that can be contained in $\sigma$ in Case 1. 
        %
        Dashed red lines represent the vertices in $N^+ (v_3)$.
        }
		\label{fig:max_int_Case1}
	\end{figure}
  By Lemma~\ref{lem:det}~$(iii)$, the condition $N^+(v_{i(\sigma)})=\{ad,ae\}$ implies that there are only two possible ways to continue: either with the remaining edges of $K_n$ incident to $ad$,  or with that incident to $ae$, in arbitrary order. 
    
   We now give a closer look at the edges in $E^\sigma$.
    Let $c$ be the second vertex of $K_n$ in $v_2$, i.e. $v_2 = ac$, and assume that the $i(\sigma)$-prefix of $\sigma$ is followed by the edges incident to $ad$.

    First, starting with $v_1=ab$ and $v_2=ac$  adds already $\delta_{ab}\cup \delta_{ac^-} \cup K_{\{a,b,c\}}$ to $E^\sigma$, consisting of $3(n-2)$ edges: $2(n-2)$ edges in $\delta_{ab} \subseteq Q_a \cup Q_b$, $n-3$ edges in $\delta_{ac-} \subseteq Q_a$ (the edge between $ab$ and $ac$ has already been counted) and in addition, the edge between $ac$   and   $bc$, which is neither in $Q_a$, nor in $Q_b$.
    Then $v_3,\ldots, v_{n-3}$ is the list of all other edges of $K_n$ incident to  $a$,  except $ad, ae$,  and $v_{n-2}=de$.
    We continue by adding the remaining $n-3$ edges incident to $d$ different from $ad$, in arbitrary order.
    Finally, $v_{2(n-2)}\coloneqq ae$.
    Overall, we added to $E^\sigma$ the edges of $Q_a$, and besides that, the $n-2+1$ edges of $\delta_{ad}\cup K_{\{a,d,e\}}$ (respectively, $\delta_{ae}\cup K_{\{a,d,e\}}$), symmetrically to the effect of the starting two edges.
    We have that $E^\sigma$ corresponds to $E_{a,b,c,d,e}$ in~(\ref{eq:typeA}); moreover
    $$ |E^\sigma| =\frac{(n-1)(n-2)}2 + 2(n-1) = \frac{(n+2)(n-1)}2. $$

	\medskip
 
	\noindent{\bf Case~2}: $i(\sigma)=3$.
	
	Then $v_2=ac\in E(K_n)$ for some $c\in V(K_n)$ different of $a$ and $b$, 
	and the assumption $i=3$ restricts our choices to the following three possibilities for $v_3$: 
	$v_1$, $v_2$, $v_3$ form the three edges of a triangle of $K_n$, then $v_3=bc$; or $v_1$, $v_2$, $v_3$ form a path, that is, $v_3=bd$, or  $v_3=cd$, so that $a,b,c,d$ are four different vertices of $K_n$; or else, $v_3=ed$, where $a,b,c,d,e$ are five different vertices of $K_n$ (Fig.~\ref{fig:max_int_Case2}).
	\begin{figure}[ht]
		\centering
		\includegraphics[scale=0.75]{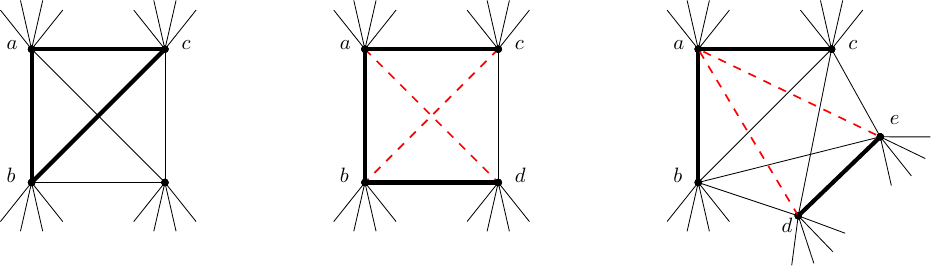}
		\caption{The three edges of $V(K_n)$ that $\sigma$ may start within Case 2. }
		\label{fig:max_int_Case2}
	\end{figure}
	
	\smallskip {\bf Case 2.1}: $v_1$, $v_2$, $v_3$ form the three edges of a triangle of $K_n$ (Fig. \ref{fig:max_int_Case2} (left)).
	
	In this case, $N^+(v_3) = \emptyset$.
    Thus, any completion of the ordering starting with $v_1, v_2$ will be contained in $\delta_{ab}\cup\delta_{ac}$, a subset of (\ref{eq:typeA}), (\ref{eq:typeB}), or (\ref{eq:typeC}), contradicting the maximality of $H$.
	
	\smallskip {\bf Case 2.2}:  $v_1$, $v_2$, $v_3$ form a path (Fig. \ref{fig:max_int_Case2} (centre)).
	
	Then $v_3=bd$ (or, equivalently $v_3 = cd$), where $d\in V(K_n)$ is different from $a,b,c$.
	The set of common neighbours of $v_1$, $v_2$, $v_3$ is $\{bc, ad\}$.
    By Lemma~\ref{lem:det}~(ii), $v_4=cd$ (or, equivalently $v_4 = bd$, if $v_3 = cd$), and we can either continue the construction of an ordering $\sigma$ with all neighbours in $L(K_n)$ of $ad$,  or all neighbours of $bc$ Lemma~\ref{lem:det} (iii), arriving respectively at: 
	\[\delta_{ab}\cup \delta_{ad}\cup  \delta_{ac^-}\cup K_{\{a,b,c\}}\cup K_{\{a,b,d\}}\cup K_{a,d,c^-}\cup K_{b,c,d^-},\] 
	\[\delta_{ab}\cup \delta_{bc}\cup  \delta_{ac^-}\cup K_{\{a,b,c\}}\cup K_{\{a,b,d\}}\cup K_{a,d,c^-}\cup K_{b,c,d^-}.\] 
    In the first case, we get $E^\sigma=F_{a,b,c,d}$ and in the second case, $E^\sigma=F'_{a,b,c,d}$. Therefore,  Case~2.2 leads anyway to an interval-order graph of the form of (\ref{eq:typeC}).

    We can now compute $|F_{\{a,b,c,d\}}|$, $|F'_{\{a,b,c,d\}}|$  for instance by counting the size of the out-neighbourhood of each $v_i$:
    $|N^+(v_1)| + |N^+(v_2)|= 3(n-2)$, as in Case 1;
    $|N^+(v_3)|=|N^+(v_4)|=2$;  then we add $2(n-4)$ edges of $L(K_n)$ with out-neighbourhoods of size $1$, after which the out-neighbourhoods are empty, so   $|F_{\{a,b,c,d\}}|= |F'_{\{a,b,c,d\}}|= 3(n-2)+4+2(n-4)=5(n-2)$.
 
	\smallskip {\bf Case 2.3}: $v_3=de$, where $a,b,c,d,e \in V(K_n)$ are five different vertices (Fig.~\ref{fig:max_int_Case2}~(right)).

	In this case $N^+(v_3)=\{ad, ae\}$, so by Lemma~\ref{lem:det}~$(i)$, all the edges incident to $a$, except from $ad$ and $ae$, precede $v_{i(\sigma)}$, that is,  $n-2=i(\sigma)=3$. Hence $n=5$ and, continuing the ordering $\sigma$ as in Case 1, we get the edge-set (\ref{eq:typeA}).

    \medskip
 
	\noindent {\bf Case~3}: $i(\sigma)=2$, that is, $\{v_1,v_2\}$ is a matching (Fig. \ref{fig:max_int_Case3} (left)).

    Note that if the edges corresponding to $v_1$ and $v_2$ are incident, then a common point of $v_1$ and $v_2$ can be chosen as $a$ yielding $i(\sigma) > 2$.
    Hence, we can assume that $v_2=cd$, and $a,b,c,d\in V(K_n)$ are different.
    So $N^+ (v_2)= N(v_1)\cap N(v_2) = \{ad,db,bc, ca\}$, and any choice of $v_3$ corresponding to an edge in $K_n$ incident to $a,b,c$, or $d$ leads to $|N^+(v_3)|=2$ (Fig.~\ref{fig:max_int_Case3}).
	There are two ways of continuing then (similarly to Cases 2.2 and 2.3):
	\begin{figure}[ht]
		\centering
		\includegraphics[scale=0.75]{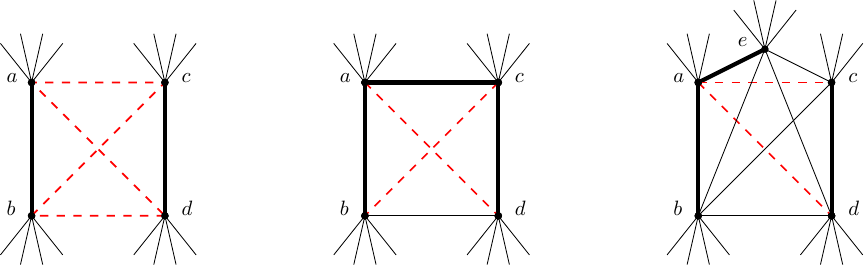}
		\caption{An illustration of Case 3 and its two subcases.}
		\label{fig:max_int_Case3}
	\end{figure}

    \smallskip {\bf Case 3.1}: $v_1,v_2,v_3$ form a path (Fig. \ref{fig:max_int_Case3} (centre)).

    Then $v_3 = ac$ (or, equivalently, $v_3 = bd)$.
    The set of common neighbours of $v_1$, $v_2$, $v_3$ is $\{bc, ad\}$.
	Thus, by Lemma~\ref{lem:det} (ii), $v_4=bd$ (or, equivalently, $v_4 = ac$), and the size of the out-neighbourhood remains $2$.
    As all edges incident to both $bc$ and $ad$ already appear in the ordering, $N^+(v_5)$ has a size at most $1$.
    By Lemma~\ref{lem:det}~$(iii)$, we can conclude the ordering with all neighbours in $L(K_n)$ of $ad$,  or all neighbours of $bc$, arriving respectively at:
    \[ \delta_{ab}\cup \delta_{ad}\cup K_{\{a,b,c,d\}}, \text{ and } \delta_{ab}\cup \delta_{bc}\cup K_{\{a,b,c,d\}}.\] 
    Hence, $E^\sigma$ is an edge-set of the form (\ref{eq:typeB}).

    Besides the $12$ edges of $K_{a,b,c,d}$, each of $\delta_{ab}$ and $\delta_{ad}$ contain $2(n-4)$  edges of $L(K_n)$, so  $E^\sigma$ has  $12 + 2(n-4) + 2(n-4)  = 4(n-1)$ edges.

    \smallskip{\bf Case 3.2}: $v_3=ae$,  where $a,b,c,d,e\in V(K_n)$ are five different vertices (Fig.~\ref{fig:max_int_Case3} (right)). 
    
	Then $N^+(v_3)=\{ac,ad\}$, which adds the two edges $(ae,ac), (ae, ad)$ to $E^\sigma$.  By Lemma~\ref{lem:det} (ii), $\sigma$ has to continue with the rest of the edges incident to $a$, in arbitrary order, finishing with $v_{n-1} = ac$, $N^+(v_{n-1})=\{ad\}$ (or, equivalently, $v_{n-1} = ad$, if so, swap the label of $c$ and $d$). 
    Applying Lemma~\ref{lem:det} (ii) again, we have to complete $\sigma$ adding the remaining $n-3$ edges of $K_n$ that are incident to $d$, in arbitrary order. 
    We now get the set:

 \[ \delta_{ab}\cup \delta_{ad}\cup  \delta_{ac^-}\cup K_{\{a,c,d\}}\cup K_{\{a,b,d\}} \cup K_{a,b,c^-}\cup K_{c,d,b^-},\] 
 which is, $F_{a,d,c,b}$, 
concluding again with $E^\sigma$ of the form (\ref{eq:typeC}).

\medskip

\noindent
Cases $1$, $2$, and $3$ conclude the proof of the lemma.
\qed
\end{proof}

\section{Boxicity of the Petersen graph and complements of line graphs}\label{sec:petersen}

In this section, we exploit the description of the maximal interval-order subgraphs of $L(K_n)$ (Section \ref{sec:interval_orders}) to prove our main results. We  say that a subgraph  is \emph{ of type (a), (b), or (c) } if its edge-set corresponds to that 
of (\ref{eq:typeA}), (\ref{eq:typeB}), or (\ref{eq:typeC}), respectively; $F_{a,b,c,d}$ and $F'_{a,b,c,d}$ are both considered as of type (c). 

In Section \ref{subsec:proof_main_thm} we establish  Theorem \ref{thm:box_kneser},  that is $\boxi(\overline{L(K_n)}) = n - 2$ for every $n \geq 5$. First, we show the easier upper bound (Lemma \ref{lem:kneser_upper_bound}). The lower bound, that is, the tightness of the proven upper bound, is then proved for $n=5,6$  separately (Lemmas \ref{lem:box_petersen} and \ref{lem:boxlk6}).
These two proofs introduce already the general ideas, but with the difference that for $n\le 6$ the interval-order subgraphs of type  (b) and (c) are larger or equal to those of type (a), so they do play a more essential role.  Then, in the main proof, we address the challenge of extending the proof for  $n\ge 7$.  
Section \ref{subsec:box_co_line} makes one more step for generalizing the results to arbitrary complements of line graphs (without the assumption $G = K_n$), finishing the proofs of Theorems~\ref{thm:minimum_completion} and \ref{thm:box_co_line}. 

\subsection{ Proving Theorem \ref{thm:box_kneser} }\label{subsec:proof_main_thm}
Theorem 1.1 in~\cite{2023_Caoduro} states that the boxicity of the Kneser-graph $\KG(n,k)$ is at most $n-2$ for $n\geq 2k+1$.
We present a simpler proof for $k=2$.

\begin{lemma}\label{lem:kneser_upper_bound}
	For $n\geq 5$, $\boxi(\overline{L(K_n)}) \leq n-2$.
\end{lemma}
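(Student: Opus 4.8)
The plan is to invoke Lemma~\ref{lem:Coz_Rob}: since $\overline{\overline{L(K_n)}}=L(K_n)$, it suffices to exhibit an $(n-2)$-interval-order cover of $L(K_n)$, that is, to cover $E(L(K_n))$ by $n-2$ interval-order subgraphs. The starting observation is that the cliques $Q_1,\dots,Q_n$ partition $E(L(K_n))$: two incident edges of $K_n$ meet in exactly one vertex, so each edge of $L(K_n)$ lies in exactly one $Q_v$. Each $(V(L(K_n)),Q_v)$ is trivially an interval-order graph (a clique together with isolated vertices), so this already yields an $n$-interval-order cover. The whole point is therefore to save two of the $n$ cliques.

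To save them, I would single out two vertices of $K_n$, say $n-1$ and $n$, and build for each remaining center $i\in[n-2]$ a single interval-order subgraph $H_i$ that carries all of $Q_i$ together with a controlled piece of both $Q_{n-1}$ and $Q_n$. Concretely, take $H_i$ to be the type-(a) subgraph of~(\ref{eq:typeA}) with $a=i$, $b=n-1$, $d=n$, and $c,e$ any two distinct vertices of $[n-2]\setminus\{i\}$ (possible precisely because $n\ge5$, since then $|[n-2]\setminus\{i\}|=n-3\ge2$). By Lemma~\ref{lem:maximal_co_interval} (equivalently, by Corollary~\ref{cor:co_intervals}) each such $H_i$ is an interval-order subgraph of $L(K_n)$, and by construction it contains $Q_i\cup\delta_{i(n-1)}\cup\delta_{in}$.

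The crucial step is then a covering count inside the two saved cliques. The star $\delta_{i(n-1)}$ contains, inside $Q_{n-1}$, exactly the edges of $L(K_n)$ joining $i(n-1)$ to every other edge at $n-1$, i.e.\ the star of the clique $Q_{n-1}$ centred at the vertex $i(n-1)$. As $i$ ranges over $[n-2]$, these centres are all vertices of $Q_{n-1}$ except the single vertex $(n-1)n$; since every edge of $Q_{n-1}$ has at least one endpoint different from $(n-1)n$, these stars together cover all of $Q_{n-1}$. The identical argument applied to $\delta_{in}$ covers all of $Q_n$. Combined with $Q_i\subseteq E(H_i)$ for every $i\in[n-2]$, this shows that each $Q_v$ is covered, so $\bigcup_{i=1}^{n-2}E(H_i)=E(L(K_n))$. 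This gives the desired $(n-2)$-interval-order cover, whence $\boxi(\overline{L(K_n)})\le n-2$.

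I expect the main obstacle to be exactly this last combinatorial accounting, namely choosing the extra structure of the $n-2$ type-(a) subgraphs so that the two leftover cliques $Q_{n-1}$ and $Q_n$ are covered \emph{completely} by the partial stars, rather than leaving a handful of uncovered edges among the non-centres. Fixing $b=n-1$ and $d=n$ uniformly across all $i$ is what forces the star centres to exhaust all but one vertex of each leftover clique, which is precisely what makes the cover exact with only $n-2$ subgraphs.
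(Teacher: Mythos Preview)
Your proposal is correct and essentially identical to the paper's proof: both construct $n-2$ type-(\ref{eq:typeA}) interval-order subgraphs with centres $a=i\in[n-2]$ and $\{b,d\}=\{n-1,n\}$, so that each $Q_i$ is directly covered and the two leftover cliques $Q_{n-1}$, $Q_n$ are swept by the stars $\delta_{i(n-1)}$, $\delta_{in}$. Your write-up even spells out the star-covering argument for $Q_{n-1}$ and $Q_n$ in slightly more detail than the paper does, and correctly notes that $n\ge5$ is exactly what guarantees the existence of the auxiliary vertices $c,e$.
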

\begin{proof}
    Denote the vertices of $V(K_n)$ by $v_1, \ldots, v_n$.
    By Lemma~\ref{lem:maximal_co_interval}, we can construct a family $\Cscr = \{E_1, \ldots, E_{n-2}\}$, where, for each $i\in [n-2]$, $(V(K_n), E_i)$ is an interval-order subgraph of $L(K_n)$ of type (a) and $Q_{v_i} \cup \delta_{v_iv_{n-1}} \cup \delta_{v_iv_{n}} \subseteq E_i$.
    We show that $\Cscr$ covers $L(K_n)$.

    As $E(L(K_n)) = \bigcup_{i=1}^n Q_{v_i}$ and, for $i\in [n-2]$, $Q_{v_i} \subseteq E_i$, we just need to argue that $\Cscr$ also covers $Q_{v_{n-1}}$ and $Q_{v_n}$.
    To do this, note that $Q_{v_{j}}$ ($j\in [n]$) can be covered by $\{\delta_{v_iv_{n-1}} \ : \ i \in [n], \  i \neq j\}$; however, any $n-2$ of these $\delta$  suffice. 
    In particular, $Q_{v_{n-1}}$ and $Q_{v_{n}}$ are covered by the union of $\delta_{v_iv_{n-1}} \cup \delta_{v_iv_{n}} \subseteq E_i$ for $i\in [n-2]$.
\qed
\end{proof}

Now, we focus on the lower bound.
Although the number of edges grows quadratically in the graphs of type (a) and only linearly in those of type (b) and (c), the three sizes are similar for small values of $n$.
Therefore, the arguments for proving the lower bound for $n =5$, $n=6$, and $n \geq 7$ are slightly different.
We handle them separately in Lemmas~\ref{lem:box_petersen}, \ref{lem:boxlk6}, and \ref{lem:boxlk7+}.

Before moving to the proofs, we make the following useful observation.
Let $\{a,b,c\}$ and $\{a',b',c'\}$ be two sets of three distinct vertices of $V(K_n)$. We have
\begin{equation}\label{fact:intersecting_deltas}
\text{$\big ( \delta_{ab} \cup \delta_{ac} \big ) \cap \big ( \delta_{a'b'} \cup \delta_{a'c'} \big) = \emptyset$ if and only if $\{a,b,c\}\cap\{a',b',c'\}=\emptyset$.}
\end{equation}

\begin{lemma}\label{lem:box_petersen}
	$\boxi(\overline{L(K_5)}) \geq 3$.
\end{lemma}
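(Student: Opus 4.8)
The goal is to show $\boxi(\overline{L(K_5)}) \geq 3$, which by Lemma~\ref{lem:Coz_Rob} is equivalent to showing that $E(L(K_5))$ cannot be covered by $2$ interval-order subgraphs of $L(K_5)$. Since every interval-order subgraph is contained in a maximal one, and Lemma~\ref{lem:maximal_co_interval} tells us that every maximal interval-order subgraph of $L(K_5)$ is of type (a), (b), or (c), the plan is to suppose for contradiction that two such subgraphs $H_1, H_2$ cover all $\binom{5}{2} \cdot \ldots$ edges of $L(K_5)$, and derive a contradiction by a counting/covering argument.

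\medskip

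\textbf{Setting up the count.}\quad
First I would record the relevant sizes for $n=5$. The edge count of $L(K_5)$ is $\sum_{v} |Q_v| = 5 \cdot \binom{4}{2} = 30$. By the explicit formulas in Lemma~\ref{lem:maximal_co_interval}, at $n=5$ a type-(a) graph has $|E_{a,b,c,d,e}| = \frac{(n+2)(n-1)}{2} = \frac{7\cdot 4}{2}=14$ edges, a type-(b) graph has $4(n-1)=16$ edges, and a type-(c) graph has $5(n-2)=15$ edges. So even the largest maximal interval-order subgraph has $16$ edges, and $16+16 = 32 \geq 30$; hence a crude edge count alone does \emph{not} rule out a $2$-cover. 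This is exactly why the lemma is nontrivial for small $n$, and it tells me the main obstacle: I will need a structural argument about \emph{which} edges each type can cover, not merely how many.

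\medskip

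\textbf{The structural obstruction.}\quad
The key observation I would exploit is~(\ref{fact:intersecting_deltas}): the ``double-star'' sets $\delta_{ab}\cup\delta_{ac}$ built on two disjoint triples of vertices are edge-disjoint. In $K_5$ there are only $5$ vertices, so any two triples of vertices necessarily intersect. Each of the three types of maximal interval-order subgraph is concentrated on a small number of vertices of $K_n$ — type (a) on $5$ vertices but with a ``core'' pair/triple structure, and types (b) and (c) on just $4$ vertices. The plan is to argue that each maximal interval-order subgraph leaves some clique $Q_v$ (or a large, identifiable portion of several cliques) uncovered, and that two subgraphs cannot between them cover all five cliques $Q_{v_1},\ldots,Q_{v_5}$. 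Concretely, I would examine, for each type, the ``deficiency'': which of the $\binom{5}{3}$ triangles of $L(K_5)$ (the copies of $K_{\{x,y,z\}}$) it fails to contain, or which cliques $Q_v$ it meets only partially. Since each type is anchored at a distinguished vertex $a$ and at most one or two further distinguished vertices, two subgraphs provide at most a bounded number of ``fully covered'' local structures, and with $5$ vertices there will be some triangle of $K_5$ none of whose three corresponding $L(K_5)$-edges lies in $H_1 \cup H_2$.

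\medskip

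\textbf{Carrying it out and the hard part.}\quad
Operationally I would split into cases according to the unordered pair of types $\{t_1,t_2\}$ of $H_1,H_2$ (six cases: aa, ab, ac, bb, bc, cc), and within each case use the anchor vertices to locate an uncovered edge. I expect the type-(b)/type-(b) and type-(b)/type-(c) cases to be the tightest, since type (b) is the largest; there the pure edge count is closest to $30$, so I anticipate needing~(\ref{fact:intersecting_deltas}) most sharply, pinning down that a type-(b) graph's edges all ``use'' its four anchor vertices and hence miss every edge of $Q_v$ for the one vertex $v$ outside its $4$-set in a way the partner cannot repair. The main obstacle, as flagged by the near-equality $16+16=32$ versus $30$, is precisely that crude counting fails by a hair; the resolution must be a disjointness/overlap argument showing the two covers are forced to overlap substantially (so their union has far fewer than $32$ distinct edges), or dually that some specific structure of size larger than the leftover slack remains uncovered. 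If a clean case analysis proves unwieldy, a viable fallback is to identify a small induced subgraph of $\overline{L(K_5)}$ (equivalently of $\KG(5,2)=$ Petersen) already known or easily shown to have boxicity $3$, but I would prefer the self-contained covering argument above since it generalizes toward Lemmas~\ref{lem:boxlk6} and~\ref{lem:boxlk7+}.
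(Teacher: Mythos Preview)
Your plan is correct and matches the paper's approach: reduce to a $2$-interval-order cover of $L(K_5)$, invoke Lemma~\ref{lem:maximal_co_interval} for the type sizes $14,15,16$, and use~(\ref{fact:intersecting_deltas}) to force enough overlap that $|E_1\cup E_2|<30$. The paper collapses your six-case split into two strokes: since any two triples in $[5]$ meet,~(\ref{fact:intersecting_deltas}) already gives $|E_1\cap E_2|\ge 1$, so only the pairs $(b,b)$ and $(b,c)$ survive; then, because types~(b) and~(c) are each supported on four vertices of $K_5$, the two $4$-sets share a triple $\{a,b,c\}$ whose triangle $K_{\{a,b,c\}}$ contributes at least $2$ to $|E_1\cap E_2|$, yielding $|E_1\cup E_2|\le 16+15-2=29<30$.
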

\begin{proof}
	Assume for a contradiction that $\boxi(\overline{L(K_5)}) \leq 2$. Let $\{E_1, E_2\}$ be  a $2$-interval-order cover of $L(K_5)$, and assume that $(V,E_1)$ and $(V,E_2)$ are maximal interval-order subgraphs of $L(K_5)$.
	By Lemma \ref{lem:maximal_co_interval},  $|E_i|=14$  (a), or $|E_i|=15$ (c), or $|E_i|=16$ (b), $(i=1, 2)$. In each of these cases, there are three distinct vertices $a_i,b_i,c_i \in V(K_5)$ such that $\delta_{a_ib_i} \cup \delta_{a_ic_i} \subseteq E_i$ ($i \in \{1,2\}$), (see (\ref{eq:typeA}), (\ref{eq:typeB}), and (\ref{eq:typeC})).
    The set $V(K_5)$ has only $5$ vertices, so $\{a_1,b_1,c_1\}$ and $\{a_2,b_2,c_2\}$ have to intersect, and then $|E_1 \cap E_2| \geq 1$ by~(\ref{fact:intersecting_deltas}).
	
	As $|E(L(K_5))| = 5\binom{4}{2} = 30$, $\{E_1, E_2\}$ forms an interval-order cover only if $E_1$ and $E_2$ have both at least $15$ edges, and at least one of them has $16$ edges. Assume that $|E_1|=16$ ($E_1$ is of type (b)), and $E_2$ has either $16$ or $15$ edges (it is of type (b) or (c)). 
	In both cases, there are two sets of four distinct vertices in $V(K_5)$ defining the edge-set of $E_1$ and $E_2$.
	These two sets have at least three common vertices, say $\{a,b,c\}$.
    It follows that $K_{a,b,c}\subseteq E_1$, and also $K_{a,b,c}\subseteq E_2$ if it is of type (b), or $|K_{a,b,c}\cap E_2|\ge 2$  if it is of type (c).
    Either way, $|E_1| + |E_2| - | E_1 \cap E_2|\le 16+15-2 < 30$, contradicting the assumption that  $E_1\cup E_2=E(L(K_5))$.
    \qed
\end{proof}

\begin{lemma}\label{lem:boxlk6}
	$\boxi(\overline{L(K_6)}) \geq 4$.
\end{lemma}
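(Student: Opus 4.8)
The plan is to mirror the counting argument of Lemma~\ref{lem:box_petersen}, but to exploit a numerical coincidence that is special to $n=6$. Suppose for contradiction that $\boxi(\overline{L(K_6)})\le 3$. By Lemma~\ref{lem:Coz_Rob} there is a $3$-interval-order cover $\{E_1,E_2,E_3\}$ of $L(K_6)$, and, extending each $E_i$ to an inclusion-wise maximal interval-order subgraph (which only enlarges the union, so the family remains a cover), I may assume each $(V,E_i)$ is maximal. By Lemma~\ref{lem:maximal_co_interval} each $E_i$ is then of type (a), (b), or (c); evaluating the three size formulas at $n=6$ gives $\tfrac{(n+2)(n-1)}2=\tfrac{8\cdot 5}2=20$, $4(n-1)=20$, and $5(n-2)=20$, so in every case $|E_i|=20$. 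On the other hand $|E(L(K_6))|=6\binom{5}{2}=60$, since $E(L(K_6))=\bigcup_v Q_v$ with the cliques $Q_v$ edge-disjoint.

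Next I would observe that the cover is forced to be an exact edge-\emph{partition}. Indeed $|E_1|+|E_2|+|E_3|=60=|E(L(K_6))|=|E_1\cup E_2\cup E_3|$, which is possible only if $E_1,E_2,E_3$ are pairwise edge-disjoint. This is the step where the $n=6$ coincidence does all the work: once every maximal piece has exactly $20$ edges, three of them can cover the $60$ edges only with no overlap at all.

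The contradiction then comes from a pigeonhole count on the ``centres.'' Inspecting (\ref{eq:typeA}), (\ref{eq:typeB}), (\ref{eq:typeC}), each $E_i$ contains the union of two stars sharing a common centre and spanning exactly three vertices of $K_6$: namely $\delta_{ab}\cup\delta_{ad}$ for types (a), (b) and for $F_{a,b,c,d}$, and $\delta_{ab}\cup\delta_{bc}=\delta_{ba}\cup\delta_{bc}$ (common centre $b$) for $F'_{a,b,c,d}$. Let $T_i\subseteq V(K_6)$ be the corresponding $3$-element vertex set. Since $|T_1|+|T_2|+|T_3|=9>6=|V(K_6)|$, the sets $T_1,T_2,T_3$ cannot be pairwise disjoint, so some $T_i\cap T_j\neq\emptyset$ with $i\neq j$. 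By the ``only if'' direction of (\ref{fact:intersecting_deltas}), the associated double-stars then share an edge, so $E_i\cap E_j\neq\emptyset$, contradicting that the cover is a partition. This finishes the proof.

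The only place where care is genuinely needed is the size coincidence at $n=6$: it is exactly because the quadratic-size type (a) and the linear-size types (b), (c) all collapse to $20$ edges that a $3$-cover is pinned down to an exact partition, turning the otherwise soft intersection fact (\ref{fact:intersecting_deltas}) into a contradiction via a bare pigeonhole. I would also verify the one mild subtlety flagged above, that (\ref{fact:intersecting_deltas}) really does apply to the $F'$ form, by rewriting $\delta_{ab}\cup\delta_{bc}$ as two stars with the shared centre $b$. For $n\ge 7$ type (a) strictly dominates and the equality $\sum_i|E_i|=|E(L(K_n))|$ no longer forces any structure, which is precisely why that regime demands a different argument (Lemma~\ref{lem:boxlk7+}).
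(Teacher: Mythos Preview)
Your proof is correct and follows essentially the same approach as the paper's: reduce to a cover by three maximal interval-order subgraphs, use the $n=6$ coincidence that all types have exactly $20$ edges so the cover must be an exact partition, and then invoke~(\ref{fact:intersecting_deltas}) together with the pigeonhole bound $3\cdot 3>6$ on the triple of ``centre'' vertex sets. Your explicit check that $F'_{a,b,c,d}$ also contains a double star with a common centre (namely $b$) is a small clarification the paper leaves implicit.
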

\begin{proof}
	Assume for a contradiction that $\boxi(\overline{L(K_6)}) \leq 3$. Let $\{E_1,E_2,E_3\}$ be a $3$-interval-order cover of $L(K_6)$, and assume that $(V,E_1)$, $(V,E_2)$, and $(V,E_3)$ are maximal interval-order subgraphs of $L(K_5)$.
    By Lemma \ref{lem:maximal_co_interval}, $|E_i| = 20$ for each $i \in [3]$ and, additionally, there are three distinct vertices $a_i,b_i,c_i \in V(K_6)$ such that $\delta_{a_ib_i} \cup \delta_{a_ic_i} \subseteq E_i$. Both of these observations hold regardless of the type of $E_i$.
	As $|E(L(K_6))| = 6\binom{5}{2} = 60$, $E_1,E_2$, and $E_3$ must be pairwise disjoint. Applying~(\ref{fact:intersecting_deltas}) three times, we deduce that the nine vertices of $a_i,b_i,c_i$ for  $i \in [3]$  are all distinct, contradicting $|V(K_6)| = 6$. 
    \qed
\end{proof}

\begin{lemma}\label{lem:boxlk7+}
	For $n\geq 7$, $\boxi(\overline{L(K_n)}) \geq n-2$.
\end{lemma}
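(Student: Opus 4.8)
The plan is to push the counting argument of Lemma~\ref{lem:boxlk6} to general $n\geq 7$, but now exploiting the \emph{quadratic} size of type (a) sets. Suppose for contradiction that $\{E_1,\ldots,E_{n-3}\}$ is an $(n-3)$-interval-order cover of $L(K_n)$, with each $(V,E_i)$ a maximal interval-order subgraph. For $n\geq 7$ the type (a) sets, of size $\frac{(n+2)(n-1)}{2}$, are strictly larger than the type (b) and (c) sets, of sizes $4(n-1)$ and $5(n-2)$; this is the key contrast with the small cases. The total edge count is $|E(L(K_n))| = n\binom{n-1}{2} = \frac{n(n-1)(n-2)}{2}$. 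First I would record, as in the earlier lemmas, that each $E_i$ carries a triple of distinct vertices $a_i,b_i,c_i\in V(K_n)$ with $\delta_{a_ib_i}\cup\delta_{a_ic_i}\subseteq E_i$, by inspecting (\ref{eq:typeA}), (\ref{eq:typeB}), (\ref{eq:typeC}).

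\textbf{Reducing to a covering-by-cliques obstruction.}
Recall that $E(L(K_n))=\bigcup_{v\in V(K_n)} Q_v$, where each $Q_v$ is a clique on the $n-1$ edges of $\delta(v)$. The natural strategy is to show that $n-3$ interval-order subgraphs cannot cover all $n$ of these cliques. The crucial structural fact I would establish is how a single $E_i$ can intersect the cliques $Q_v$: a type (a) set contains an entire clique $Q_a$, but a type (b) or (c) set contains \emph{no} full clique $Q_v$ (one checks from the explicit edge-lists that type (b)/(c) sets meet each $Q_v$ only partially, in a star-like or $K_{u,v,w}$-like pattern). Thus whichever $Q_v$ are covered \emph{entirely} by a single set must be covered by type (a) sets, and each type (a) set covers exactly one full clique $Q_a$. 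Consequently at least some cliques $Q_v$ must be covered only by the union of several sets, and I would count how many edges of a fixed $Q_v$ a non-(a) set, or a type (a) set centered elsewhere, can contribute. The observation (\ref{fact:intersecting_deltas}) is the tool that forces the covering triples to be nearly disjoint, limiting overlaps.

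\textbf{The decisive inequality.}
The heart of the argument is a double-counting or extremal inequality. I expect to fix attention on the cliques not covered by a dedicated type (a) set and show that covering each of them forces too many ``centers'' among the $a_i,b_i,c_i$, which collide with $|V(K_n)|=n$ by (\ref{fact:intersecting_deltas}); alternatively, a direct edge count shows $\sum_{i=1}^{n-3}|E_i| < |E(L(K_n))|$ once we account for unavoidable pairwise intersections $|E_i\cap E_j|\geq 1$ forced whenever two covering triples meet. Because the sets are few ($n-3$ of them) while $n$ cliques each of size $\binom{n-1}{2}$ must be exactly covered, a careful accounting of how many $Q_v$ can be ``absorbed'' by type (a) sets versus ``assembled'' from $\delta$-stars should yield the contradiction. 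The main obstacle, and the step needing the most care, is precisely this bookkeeping: establishing a sharp enough bound on $|E_i\cap Q_v|$ across the three types simultaneously, so that the total coverage provably falls short of $\frac{n(n-1)(n-2)}{2}$ with only $n-3$ sets. Getting the constants to close the gap at exactly $n-3$ (rather than, say, $n-4$ or $n-2$) is where the type-size asymmetry for $n\geq 7$ must be used quantitatively, and I would verify the boundary case $n=7$ by hand to calibrate the inequality.
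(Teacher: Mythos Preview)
Your framework is the right one, and the paper's proof does proceed by a counting argument using forced pairwise intersections via~(\ref{fact:intersecting_deltas}). But your sketch stops short of the two concrete steps that actually close the gap, and without them the argument does not go through.

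First, the paper begins by showing with a direct edge count that at least $n-4$ of the $E_i$ must be of type~(a): with at most $n-5$ type~(a) sets, the total $(n-5)\frac{(n+2)(n-1)}{2}+2\cdot 5(n-2)$ falls below $n\binom{n-1}{2}$ for $n\geq 7$. This leaves only two cases (all $n-3$ of type~(a), or exactly one of type (b)/(c)), and in each the slack $\sum_i|E_i|-|E(L(K_n))|$ is computed exactly, namely $\frac{(n-1)(n-6)}{2}$ and $n-6$. Your proposal never pins down this slack, which is why you end up saying the bookkeeping ``needs the most care'' and plan to calibrate at $n=7$ by hand.

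Second, and this is the real gap, (\ref{fact:intersecting_deltas}) alone does not force enough intersections: two triples $\{a_i,b_i,c_i\}$ and $\{a_j,b_j,c_j\}$ can a priori be disjoint, so ``$|E_i\cap E_j|\geq 1$ whenever two covering triples meet'' gives you nothing until you force the triples to meet. The paper's key device (in the all-type-(a) case) is a \emph{normalization}: since $Q_{a_i}\subseteq E_i$ and the allowed slack is smaller than $|Q_v|$, the $a_i$ are pairwise distinct, say $a_i=v_i$; then if some $b_i$ equals $v_j$ with $j\le n-3$, the set $\delta_{a_ib_i}\subseteq Q_{v_i}\cup Q_{v_j}$ is already covered, so one may reassign $b_i,c_i$ into $\{v_{n-2},v_{n-1},v_n\}$ without weakening the cover. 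Now every pair $\{b_i,c_i\},\{b_j,c_j\}$ lies in a common $3$-element set and hence intersects, producing a distinct overlap edge $(v_ix_{ij},v_jx_{ij})\in E_i\cap E_j$ for each $i<j$; summing yields $\binom{n-3}{2}$ forced overlaps, which exceeds the slack $\frac{(n-1)(n-6)}{2}$. Your ``covering the $Q_v$'' angle and your bound on $|E_i\cap Q_v|$ never reach this normalization, and without it there is no mechanism guaranteeing that the triples collide often enough to beat the slack.
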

\begin{proof}
    Assume for a contradiction that $\boxi(\overline{L(K_n)}) \leq n-3$. Let $\Escr := \{E_i : i \in [n-3] \}$ be an $(n-3)$-interval-order cover of $L(K_n)$, and assume that $\{(V,E_i) : i \in [n-3]\}$ are maximal interval-order subgraphs of $L(K_n)$.
    By Lemma \ref{lem:maximal_co_interval}, for each $i \in [n-3]$, there are distinct $a_i, b_i,c_i \in V(K_n)$ such that: 
    \begin{itemize}
        \item [-] $E_i \supset Q_{a_i} \cup \delta_{a_ib_i} \cup \delta_{a_ic_i}$, if $E_i$ is of type (a), or
        \item [-] $E_{i} \supset \delta_{a_{i}b_{i}} \cup \delta_{a_{i}c_{i}}$, if $E_i$ is of type (b) or (c).
    \end{itemize}
    
	First, observe that \textit{at least $n-4$ interval-order graphs in this cover are of type (a)}.
	Indeed, if there are at most $n-5$ edge-sets of type (a), then, by Lemma~\ref{lem:maximal_co_interval},
    the number of covered edges is at most  $$(n-5) \frac{(n+2)(n-1)}{2} + 10(n-2),$$
	quantity that, for $n\geq 7$, is strictly smaller than $|E(L(K_n))| = n\binom{n-1}{2}$.
	Therefore, the only two cases to consider are: the cover contains $n-3$ edge-sets of type (a), or it contains $n-4$ edge-sets of type (a) and one edge-set of type (b) or (c).
    Note that $\sum_{i=1}^{n-3} |E_i|$ equals to $ |E(L(K_n))|+ \frac{(n-1)(n-6)}{2}$ in the first case, and $|E(L(K_n))|+ n-6$ in the second case.
    In both cases, we show that for any possible assignment of $\{(a_i,b_i,c_i) : i \in [n-3]\}$, the number of edges in $\bigcup_{i \in [n-3]} E_i$ is strictly smaller than $|E(L(K_n))|$.
    
	\medskip 	\noindent
    {\bf Case 1:} $E_i$ is of type (a)  for all $i\in[n-3]$.
	
    For any vertex $v\in V(K)$, the edge-set $Q_{v}$ has $\frac{(n-1)(n-2)}{2} > \frac{(n-1)(n-6)}{2}$ edges.
    As $Q_{a_i} \subseteq E_i$ ($i\in [n-3])$ and the intersection between two sets in $\Escr$ cannot be larger than $\frac{(n-1)(n-6)}{2}$, the elements in  $\{a_i : i \in [n-3]\}$ must be pairwise distinct.
    Thus, we can assume without loss of generality that for all $i \in [n-3]$, $a_i = v_i$.

    Then, if $b_i$ ($i \in [n-3]$) coincides with $a_j = v_j$ for some $j\in [n-3]$, the edges in $\delta_{a_ib_i}$ are already part of the covering because $\delta_{a_ib_i} = \delta_{a_ia_j} \subseteq Q_{a_i} \cup Q_{a_j}$.
    Hence, by re-assigning $b_i$ to a vertex in $\{v_{n-2},v_{n-1},v_{n}\}$, we can only enhance the covering. 
    As the same argument also applies if $c_i = v_j$ for $j\in [n-3]$, we can further assume without loss of generality that for all $i \in [n-3]$, $b_i, c_i \in \{v_{n-2},v_{n-1},v_{n}\}$.

    By the previous assumption, it follows that for each pair of distinct indices $i,j \in [n-3]$ the sets $\{b_i,c_i\}$ and $\{b_j,c_j\}$ intersect; 
    let $x_{ij}$ be a vertex in the intersection.
    For a fixed $i \in [n-3]$, the set
    $$\{ (v_i x_{ij}, v_j x_{ij}) \in E_i \cap E_j : j \in [i-1] \}$$
    contains $i-1$ distinct edges, so
    $E_i$ can add at most $|E_i| - (i-1)$ new edges to $\bigcup_{ j \in [i-1]} E_j$.
    We finally reach a contradiction:
    $$\Big | \bigcup_{i \in [n-3]} E_i \Big | \leq |E(L(K_n))| + \frac{(n-1)(n-6)}{2} - \sum_{i=2}^{n-3} (i-1) < |E(L(K_n))|.$$

	\medskip
	\noindent {\bf Case 2:} Each interval-cover graph of the cover but one, is of type (a).

    Assume that $E_i$ is of type (a) for $i \in [n-4]$, and $E_{n-3}$ is of type (b) or (c). 
    Considering the vertices $\{(a_i,b_i,c_i) : i \in [n-3]\}$ partially defining the edge-sets of $\Escr$, we can make the following assumptions:
    \begin{itemize}
        \item[$(i)$] for all $i \in [n-4]$, we have $a_i = v_i$,
        \item[$(ii)$] for all $i \in [n-4]$, we have $\{b_i, c_i\} \subseteq \{v_{n-3}, v_{n-2}, v_{n-1}, v_{n}\}$, and
        \item[$(iii)$] for $i = n-3$, we have $a_{n-3} = v_{n-3}$, $b_{n-3} = v_{n-2}$, and $c_{n-3} = v_{n-1}$.
    \end{itemize}
	The arguments for $(i)$ and $(ii)$ are analogous to those presented in Case~1.
    To see $(iii)$, observe that $a_{n-3}, b_{n-3}, c_{n-3}$ have to be distinct from the previous $a_i = v_i$ because otherwise there would exist $i$ such that $$ |E_i \cap E_{n-3}| \geq |Q_{a_i} \cap \left ( \delta_{a_{n-3}b_{n-3}}\cup \delta_{a_{n-3}c_{n-3}} \right )| \geq n-2,$$
    and then $\Escr$ would not have enough edges to cover $L(K_n)$.

    By assumptions $(ii)$ and $(iii)$, for all $i \in [n-4]$, we have $\{a_i,b_i,c_i\} \cap \{a_{n-3}, b_{n-3}, c_{n-3}\} \neq \emptyset$.
    Hence, (\ref{fact:intersecting_deltas}) yields $|E_i \cap E_{n-3}| \geq 1$.
	This means that $E_i$ can add at most $|E_i| - 1$ new edges to $E_{n-3}$.
	We now reach a contradiction:
	\[\Big | \bigcup_{i \in [n-3]} E_i \Big | \leq |E(L(K_n))| + (n-6) - \sum_{i=1}^{n-4} 1 < |E(L(K_n))|. \tag*{\qed}\]
\end{proof}

\subsection{Interval completion and boxicity of complements of line graphs}\label{subsec:box_co_line}

Lemma \ref{lem:maximal_co_interval} presents all the maximal interval-order subgraphs of $L(K_n)$.
Now, we show how to use this information to generate all the maximal interval-order subgraphs of $L(G)$ for any graph $G=(V, E)$. 
This will prove Lemma~\ref{lem:all_minimal_completion} because complementing the maximal interval-order subgraphs of $L(G)$, we get the minimal interval completions of $\overline{L(G)}$.
We then derive Theorems~\ref{thm:minimum_completion} and~\ref{thm:box_co_line} as easy applications of Lemma~\ref{lem:all_minimal_completion}.

\begin{proof}[of Lemma \ref{lem:all_minimal_completion}]
    Consider the complete graph $K_V = (V, \binom{V}{2})$.
    Let $\Ascr$  be the family of maximal interval-order subgraphs of $L(K_V)$,
    and define $\Bscr \coloneqq \{G_A[E]: G_A\in\Ascr\}$.
    As interval-order graphs are closed under taking induced subgraphs, also the graphs in $\Bscr$ are interval orders.

    We now show that all maximal interval-order subgraphs of $L(G)$ are contained in $\Bscr$.
    Let $H = (E, B)$ be a maximal interval-order subgraph of $L(G)$.
    As $H$ is also a subgraph of $L(K_V)$, there exists a maximal interval order $H' \in \Ascr$  of $L(K_V)$ such that the edge-set of $H'$ contains $B$.
    Consider the induced subgraph $H'[E]$ of $H'$, and let $B'$ be its edge-set.
    Clearly, $B \subseteq B'$.
    Moreover, as $H'[E]$ is an interval-order subgraph of $L(G)$, the maximality of $H$ yields $B' \subseteq B$.
    Therefore, $H = H'[E]$, and so $H \in \Bscr$.
   
    By Lemma~\ref{lem:maximal_co_interval}, $|\Ascr| \leq n^5$.
    Moreover, each set in $\Ascr$ can be computed in $\Oscr(n^2)$ time given the descriptions of the maximal edge-sets in Lemma~\ref{lem:maximal_co_interval}.
    Computing the induced subgraph with respect to the vertex set $E$ requires at most an additional $\Oscr(n^2)$ time per graph. 
    Hence, the total time to compute $\Bscr$ is $\Oscr(n^7)$.
    \qed
\end{proof}

Given Lemma \ref{lem:all_minimal_completion} and the notion of $k$-interval-order covers, the proofs of Theorems~\ref{thm:minimum_completion} and \ref{thm:box_co_line}  follow easily.

\begin{proof}[of Theorem \ref{thm:minimum_completion}]
    Let $G$ be a graph on $n$ vertices.
    By Lemma~\ref{lem:all_minimal_completion}, all inclusion-wise minimal interval completions of $\overline{L(G)}$ can be listed in $\mathcal{O}(n^7)$ time.
    We select the one with the least number of edges.
\qed
\end{proof}

\begin{proof}[of Theorem \ref{thm:box_co_line}]
If $k\geq n-2$, the solution to the decision problem is trivial by Theorem \ref{thm:box_kneser}, so we can assume $k \leq n-3$.
By Lemma \ref{lem:all_minimal_completion},  the family $\Bscr$ containing all maximal interval-order subgraphs of $L(G)$ has cardinality $\Oscr(n^5)$, and all its members can be presented in   $\Oscr(n^7)$ time. To decide if $\boxi(\overline{L(G)}) \leq k$, one can simply generate all $k$-tuples of elements of $\Bscr$ and check, for each of them, if it covers $L(G)$.
This takes
$\Oscr(n^7) + \Oscr(n^{5k})f(n,k)$ time where $f(n,k)$ is the time of checking whether the sets in a $k$-tuple of $\Bscr$ cover all edges of $L(G)$.
As $L(G)$ has at most $n^3$ edges and $k<n$, $f(n,k) = \Oscr(n^4)$.
\qed
\end{proof}

\section{Boxicity of line graphs}\label{sec:line_graphs}

In this section we continue relying on interval-order subgraphs of a graph for determining the boxicity of their complement.
In Section~\ref{sec:prelim}, we applied this technique to efficiently determine the boxicity of complements of line graphs.
Our focus here shifts to interval-order subgraphs of complements of line graphs for bounding the boxicity of line graphs.
In particular, in Sections~\ref{subsec:upper_bound_line_graph} and~\ref{subsec:lower_bound_line_graph}, we establish bounds on the boxicity of the line graph of the complete graph: an upper bound (Theorem~\ref{thm:line_upper}) and a lower bound (Theorem~\ref{thm:lower_bound_box_line_graph}), respectively.

\medskip

First, note that for exhibiting the maximal  interval-order subgraphs of  the  complement of a line graph, the permutations of the vertex-set of $L(G)$ can be replaced by the much simpler permutation on $V(G)$:

Let $G=(V,E)$ be a graph, $n\coloneqq|V|$, and $m\coloneqq|E|$.  
By Corollary~\ref{cor:co_intervals}, the family of the maximal interval-order subgraphs of $\overline{L(G)}$ are exactly the graphs $\overline{L(G)}^\sigma$ determined by orderings $\sigma: E\rightarrow [m]$  of its vertices, i.e. the edges of $G$.
However,  we show that several of these orders $\sigma$ define the same graph $\overline{L(G)}^\sigma$, and a family of orderings defining the same graph can be characterized by permutations $\pi: V \rightarrow [n]$ of $V$. 

Let $\pi: V\rightarrow [n]$ and denote  $\Sigma(\pi)$  the set of orderings $\sigma: E\rightarrow [m]$ of $E$, where the edge $e=uv\in E$  precedes $f=wz$ if $\max\{\pi(u), \pi(v)\}<\max\{\pi(w),\pi(z)\}$.
In cases of equality, that is, for the set of edges $F_v$ incident to a vertex $v$ such that $\pi(v)$ is larger than the $\pi$-value of their other endpoint, we add $F_v$  to $\Sigma(\pi)$ in all possible orders between the elements of $F_v$.  Indeed, all these orders define the same graph.

For example, consider $n = 5$ and the permutation $\pi = (v_1,v_2,v_3,v_4,v_5)$. 
Then the order 
\[
\big ( v_1v_2, \quad v_1v_3, v_2v_3, \quad v_1v_4, v_2v_4, v_3v_4, \quad v_1v_5, v_2v_5,v_3v_5,v_4v_5 \big )
\]
is in  $\Sigma(\pi)$, and so are all the orders obtained by arbitrarily permuting the edges within each visibly separated group.

The definition of  $\Sigma(\pi)$ corresponds to the more informal description of adding the vertices of $V$ one by one, in increasing order of $\pi$, and when a vertex $v$ is added, all the edges from the already present vertices to the newly added vertex are added to the list in all possible orders.

\begin{lemma}\label{lem:maximal_interval_order_co_line_graph}
	Let $G=(V,E)$ be a graph. Then for any maximal interval-order subgraph $H$ of $\overline{L(G)}$ there exists a permutation $\pi$ of $V$ such that   $H=\overline{L(G)}^\sigma$ for all  $\sigma\in \Sigma(\pi)$.   
\end{lemma}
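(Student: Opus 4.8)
The plan is to translate the construction $\overline{L(G)}^\sigma$ into a statement about the order in which the vertices of $G$ become ``covered'', and to read off $\pi$ from that order. For an edge $f \in E$ write $V(f)$ for its pair of endpoints, and for an ordering $\sigma = (e_1,\dots,e_m)$ of $E$ let $U_i \coloneqq V(e_1) \cup \dots \cup V(e_i)$ be the set of vertices of $G$ covered by the first $i$ edges. Since in $\overline{L(G)}$ the neighbourhood of an edge $e$ is exactly the set of edges disjoint from $e$, the out-neighbourhood produced by the construction simplifies to
\[
N^+(e_i) = \{\, f \in E : V(f) \cap U_i = \emptyset \,\},
\]
the edges lying entirely outside the currently covered vertex set. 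Consequently, for two disjoint edges $e, f$ with $e$ preceding $f$ in $\sigma$, the pair $\{e,f\}$ belongs to $\overline{L(G)}^\sigma$ if and only if $e$ is processed before \emph{any} edge incident to an endpoint of $f$, i.e. both endpoints of $f$ are still uncovered when $e$ is added. This is the key reformulation: the membership of each pair is governed only by the increasing chain $U_0 \subseteq \dots \subseteq U_m$, hence by the order in which vertices are first covered.

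First I would fix, via Corollary~\ref{cor:co_intervals}, an ordering $\sigma$ with $\overline{L(G)}^\sigma = H$, and define $\pi$ to list the vertices of $V$ in the order in which they are first covered by $\sigma$ (vertices covered simultaneously by one edge are ordered as in $\sigma$, isolated vertices placed last). Thus $\pi(v)$ records when $v$ enters the chain $U_\bullet$. The plan has two halves. The first is to show that $\sigma$ may be assumed to lie in $\Sigma(\pi)$: using the swapping statements of Lemma~\ref{lem:det}(i),(ii) together with the reformulation, maximality forces each edge to be processed at the earliest possible moment, namely at the step covering its $\pi$-larger endpoint, since processing an already-anchored edge later only shrinks its out-neighbourhood and loses edges. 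This is exactly the defining property of $\Sigma(\pi)$ — adding the vertices one at a time in increasing $\pi$-order. The second half is to prove that \emph{every} $\sigma' \in \Sigma(\pi)$ yields the same graph; combined with the first half this gives $H = \overline{L(G)}^{\sigma'}$ for all $\sigma' \in \Sigma(\pi)$.

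Since orderings in $\Sigma(\pi)$ differ only in the internal arrangement of each block $F_v$, it suffices to show that permuting the edges inside one $F_v$ preserves the edge set. The point is that every endpoint $w \ne v$ of an edge of $F_v$ has $\pi(w) < \pi(v)$ and is therefore covered \emph{before} the block $F_v$ begins: in the coverage order $\pi$, $w$ is reached via a neighbour of strictly smaller $\pi$-value, so its covering level in any $\sigma' \in \Sigma(\pi)$ is below $\pi(v)$. Hence all edges of $F_v$ share the identical out-neighbourhood $\{\,f : V(f) \cap (U \cup \{v\}) = \emptyset\,\}$, where $U$ is the covered set just before the block, and their internal order is irrelevant. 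I expect the main obstacle to be precisely the single degenerate configuration in which this fails: an edge $\{v,w\}$ that first covers both $v$ and $w$ simultaneously while $|F_v| \ge 2$, so that $w$ would be covered inside the block and the tie could change the graph. The resolution is the crux and is where maximality is essential: in that situation $v$ has a second neighbour $w'$ with $\pi(w') < \pi(v)$ already covered, and processing $\{v,w'\}$ in place of $\{v,w\}$ covers only $v$, yielding the strictly larger out-neighbourhood $\{\,f : V(f) \cap (U \cup \{v\}) = \emptyset\,\}$ and thus a strictly larger interval-order subgraph — contradicting the maximality of $H$. Ruling out this case completes the block-invariance argument and, with the two halves combined, proves the lemma.
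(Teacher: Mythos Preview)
Your approach is essentially the paper's: define $\pi$ as the first-appearance order of the vertices of $G$ along $\sigma$, invoke Lemma~\ref{lem:det}(ii) to force $\sigma\in\Sigma(\pi)$, and then argue that the internal order within each block $F_v$ is irrelevant. If anything you are more careful than the paper on the last point --- the paper dismisses block-invariance with the one-line assertion that edges sharing the same $\pi$-maximum endpoint have identical out-neighbourhoods, while you isolate the one configuration where this could fail and dispatch it with maximality.
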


\begin{proof}
Note first that for any permutation $\pi$ of $V$, all orders $\sigma\in \Sigma(\pi)$ define the same interval-order graph.
This follows from the fact that all edges of $G$ with the same maximum endpoint (relative to $\pi$) have an identical out-neighbourhood in $\overline{L(G)}$, making the order of their appearance irrelevant.

Now, let $n\coloneqq|V|$, $m\coloneqq|E|$, and $H$ be a  maximal interval-order subgraph of $\overline{L(G)}$.
By Corollary~\ref{cor:co_intervals}, there exists an ordering $\sigma$ of $E$  such that $H=\overline{L(G)}^\sigma$. It remains to prove that there exists $\pi: V\rightarrow [n]$ so that $\sigma\in\Sigma(\pi)$.

Define $\pi: V\rightarrow [n]$ with the order of vertices where vertex $u$ precedes vertex $v$ if  $u$ occurs before $v$ as an endpoint of an edge
in the series $\sigma^{-1}(1),  \ldots, \sigma^{-1}(m)$.
If they occur at the same time (e.g. for the two vertices in $\sigma^{-1}(1)$), the order of $u$ and $v$ can be chosen arbitrarily.

For each index $i \in [m]$ for which there is a vertex $v\in V$ occurring for the first time in $\sigma$, consider the set $N_i$  defined in Lemma~\ref{lem:det}~(ii).
Observe that $N_i$ consists of the set of edges of $E$ (vertices of $\overline{L(G)}$) having $v$ as one of their endpoints, where the other endpoint has occurred earlier.
Therefore, by Lemma~\ref{lem:det}~(ii), the $i$-prefix of $\sigma$ is immediately followed by a permutation of $N_i$
 in $\sigma$.
Continuing in this way (or by induction), we get that $\sigma$ is an order in $\Sigma(\pi)$. 
\qed
\end{proof}

\subsection{Proving Theorem \ref{thm:line_upper}}\label{subsec:upper_bound_line_graph}
We first prove that a significant proportion of each subgraph of the complement of a line graph can be covered by just one interval-order subgraph.
\begin{lemma}\label{lem:line_upper_stronger}
    Let $G$ be a graph and $H$ a subgraph of $\overline{L(G)}$ having $k$ edges.
    Then there exists an interval-order subgraph of $\overline{L(G)}$ containing at least $\frac{k}{3}$ edges of $H$.
\end{lemma}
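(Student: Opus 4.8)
The plan is to argue probabilistically. By Lemma~\ref{lem:maximal_interval_order_co_line_graph}, every interval-order subgraph of $\overline{L(G)}$ of the form $\overline{L(G)}^\sigma$ is governed by a permutation $\pi$ of $V(G)$ rather than by an ordering of the $m=|E(G)|$ vertices of $\overline{L(G)}$. I would therefore draw $\pi$ uniformly at random among the $n!$ permutations of $V(G)$, fix any $\sigma\in\Sigma(\pi)$, and estimate the expected number of edges of $H$ that survive in $\overline{L(G)}^\sigma$. Since $\overline{L(G)}^\sigma$ is an interval-order subgraph of $\overline{L(G)}$ by Corollary~\ref{cor:co_intervals}, exhibiting one permutation for which this count is at least $k/3$ proves the lemma; by linearity of expectation it suffices to show that each individual edge of $H$ is retained with probability at least $\tfrac13$.

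The heart of the argument is a clean, graph-independent sufficient condition for retention. An edge of $H$ is a pair $\{e,f\}$ of disjoint edges of $G$, say $e=uv$ and $f=wx$ with $u,v,w,x$ pairwise distinct. I claim that $\{e,f\}\in E(\overline{L(G)}^\sigma)$ whenever the four vertices occur in $\pi$ so that both endpoints of one edge precede both endpoints of the other, say $\max\{\pi(u),\pi(v)\}<\min\{\pi(w),\pi(x)\}$. To see this I would inspect the construction $V_0=E(G)$, $V_i=V_{i-1}\cap N_{\overline{L(G)}}(e_i)$ at the step $i$ where $e_i=e$: every edge processed up to and including $e$ lies in the prefix of blocks whose vertices all have $\pi$-value at most $\max\{\pi(u),\pi(v)\}$, so none of them contains $w$ or $x$. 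Hence $f$ is disjoint in $G$ from each of $e_1,\dots,e_{i-1}$ and from $e$ itself, placing $f\in V_i=N^+(e)$ and thus $\{e,f\}\in E^\sigma$. The within-block order is irrelevant here, exactly as guaranteed by Lemma~\ref{lem:maximal_interval_order_co_line_graph}.

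It then remains to compute the probability. The relative $\pi$-order of the four distinct vertices $u,v,w,x$ is uniform over the $4!=24$ possibilities. The event that $u,v$ both precede $w,x$ has probability $\tfrac{2\cdot 2}{24}=\tfrac16$, and by the symmetric role of $e$ and $f$ the event that $w,x$ both precede $u,v$ also has probability $\tfrac16$; these two events are disjoint and both force $\{e,f\}\in E^\sigma$, so $\Pr[\{e,f\}\in E(\overline{L(G)}^\sigma)]\ge\tfrac13$. Summing over the $k$ edges of $H$ gives expected retention at least $k/3$, and the averaging argument finishes the proof.

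The step I expect to be the main obstacle is isolating a retention criterion that depends only on the four vertices $u,v,w,x$ and not on the rest of $G$: the full membership condition for $\{e,f\}$ in $\overline{L(G)}^\sigma$ also involves which earlier vertices are non-isolated in the processed prefix, and is therefore genuinely graph-dependent. Restricting to the \emph{separated} orderings sidesteps this and yields a bound uniform in $G$. I do not expect to improve the constant: the earlier remark that the largest interval-order subgraph of $\overline{L(K_n)}$ captures only a third of the edges indicates that $\tfrac13$ is the best ratio obtainable from a single interval-order subgraph, so the analysis should be tight.
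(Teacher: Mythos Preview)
Your proposal is correct and follows essentially the same approach as the paper: sample a uniformly random permutation $\pi$ of $V(G)$, observe that for each edge $\{e,f\}$ of $H$ with $e=uv$, $f=wx$, the ``separated'' orderings guarantee $\{e,f\}\in E(\overline{L(G)}^\sigma)$, count that $8$ of the $24$ relative orders of $u,v,w,x$ are separated, and conclude by linearity of expectation. Your care in stating only a \emph{sufficient} retention criterion (and hence $\Pr[\cdot]\ge\tfrac13$) is in fact slightly more precise than the paper, which asserts an ``if and only if'' and exact probability $\tfrac13$; as you note, the full membership condition is graph-dependent, so only the inequality is justified in general, and that is all the lemma needs.
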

\begin{proof}
    In order to define the desired interval-order subgraph of  $\overline{L(G)}$, sample a permutation $\pi$ uniformly at random from the $|V(G)|!$ permutations of the vertices of $G$ and consider a permutation $\sigma \in \Sigma(\pi)$  of the edges of $G$.
    
	Let $xy$  be an edge of $H$, where $x = ab, y= cd$ are two edges of $G$.
    As $xy$ is also an edge of $\overline{L(G)}$, $a, b, c,$ and $d$ are distinct.
    Then, by the definition of $\overline{L(G)}^\sigma$, the edge $xy$ occurs in $\overline{L(G)}^\sigma$ if and only if  $\max\{\pi(a),\pi(b)\} <  \min\{\pi(c),\pi(d)\}$ or $\max\{\pi(c),\pi(d)\} <  \min\{\pi(a),\pi(b)\}$, that is, in   $8$ of the $24$ permutations of the letters $a, b, c, d$ (whose occurrence in $\pi$  is equally likely).
    Therefore, the probability of the occurrence of $xy$ in $\overline{L(G)}^\sigma$ is exactly $1/3$.
    This yields that the average number of edges of $H$ in a permutation $\sigma' \in \Sigma(\pi')$, where $\pi'$ is a permutation of $V(G)$, is $|E(H)|/3$, and so there exists a permutation $\pi^*$ such that the interval order $\overline{L(G)}^\sigma$,  $\sigma\in\Sigma(\pi^*)$ contains at least one-third of the edges of $H$.
    \qed
\end{proof} 

Note that the claimed interval-order subgraph of $\overline{L(K_n)}$ is not necessarily a subgraph of $H$.
To see this, consider the vertex-set $V \coloneqq \{v_1, \ldots, v_{12}\}$ and the graph $\overline{L(K_V)}$.
The edge-set of $\overline{L(K_V)}$ contains a matching of six edges, $\{v_1v_2, \ldots, v_{11}v_{12}\}$ that can be partitioned into three pairs of edges  $\{v_1v_2, v_3v_4\},$ $\{v_5v_6, v_7v_8\}, $ $\{v_9v_{10}, v_{11}v_{12}\}$ forming in turn a $3$-matching in $\overline{L(K_V)}$.
Let $H$ be this $3$-matching. An interval-order subgraph of $H$ cannot contain two edges of $H$ because an interval-order graph does not contain a $2K_2$, i.e., an induced matching of size $2$.
However, the permutation  $\pi=(v_1, \ldots, v_{12})$ defines an interval-order subgraph $\overline{L(K_V)}^\sigma$ ($\sigma \in \Sigma(\pi)$) of $\overline{L(K_V)}$ which contains all edges of $H$.

\smallskip

We are now ready to prove Theorem~\ref{thm:line_upper}.
\begin{proof}[of Theorem~\ref{thm:line_upper}]
We bound $\boxi(L(G))$ by constructing an interval-order cover of $\overline{L(G)}$ of size at most $\lceil 5 \log n \rceil$.
For this, apply  Lemma~\ref{lem:line_upper_stronger} iteratively $k$ times and delete each time the edges covered by the current interval-order graph.   As $|E(\overline{L(G)}| \leq n^2$, at most $n^2(\frac23)^k$ of the edges remains. The procedure stops latest when  $n^2 ({\frac23})^k <1$, that is, when $k >\frac{2 \log n}{\log 3 - \log 2}$, where $\frac{2}{\log 3 - \log 2} < 5$.
\qed
\end{proof}

\subsection{Proving Theorem~\ref{thm:lower_bound_box_line_graph}}\label{subsec:lower_bound_line_graph}
Let $\pi$ be a permutation of the set $V$.
We denote the restriction of $\pi$ to $T\subseteq V$ by $\pi|_T$.
A classical result of Erd\H{o}s and Szekeres~\cite{erdos1935combinatorial} states that, given two permutations $\pi_1, \pi_2$ of $[t^2 + 1]$, there exists a subset $T \subseteq [t^2 + 1]$, $|T|\ge t+1$, such that both $\pi_1|_T$ and $\pi_2|_T$ are monotone. 
Using the $t=2$ case of the Erd\H{o}s-Szekeres theorem iteratively, Spencer~\cite{1971_Spencer} established the following result.
\begin{lemma}[Spencer, 1971]\label{lem:spencer}
	Let $k$ be a positive integer, $n = 2^{2^{k-1}} + 1$, and $\pi_1, \ldots, \pi_{k+1}$ be $k+1$ permutations of $[n]$. 
	Then, there exist three distinct elements $a,b,c \in [n]$ such that for every $i \in [k+1]$, $\pi_i|_{\{a,b,c\}}$ is monotone.
\end{lemma}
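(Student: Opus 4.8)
The plan is to derive Lemma~\ref{lem:spencer} as a direct corollary of the Erd\H{o}s--Szekeres theorem applied iteratively, treating the $k+1$ permutations one pair at a time. I read the statement as asserting that with $n=2^{2^{k-1}}+1$ one can always find three elements on which \emph{all} $k+1$ given permutations are simultaneously monotone. The natural approach is to peel off the permutations two at a time and repeatedly shrink the ground set to a subset on which more and more of the $\pi_i$ are already monotone, using the $t=2$ instance of Erd\H{o}s--Szekeres to halve (in the iterated-exponent sense) the size each time.

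First I would set up the iteration. Take the first two permutations $\pi_1,\pi_2$ restricted to $[n]$ with $n=2^{2^{k-1}}+1$. The $t=2$ case of Erd\H{o}s--Szekeres says that among $t^2+1=5$ elements two permutations always share a monotone triple; more usefully, phrased with a general ground set of size $N$, there is a subset $T$ of size $\lceil \sqrt{N-1}\rceil + 1 \ge \sqrt{N}$ on which both restrictions are monotone. Applying this to $\pi_1|_T$ and $\pi_2|_T$ produces a set $T_1$ on which both are monotone, with $|T_1|$ roughly $\sqrt{n}$. I would then carry $T_1$ forward and apply Erd\H{o}s--Szekeres again to the \emph{next} permutation $\pi_3$ together with (any one of) the already-monotone ones --- but the cleaner bookkeeping is to process one new permutation per step while the monotonicity of all previously handled permutations is inherited by passing to a further subset, since monotonicity of a restriction is preserved under restriction to any subset. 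Each step replaces the current ground-set size $N$ by about $\sqrt{N}$, i.e.\ squares the exponent base, so after the appropriate number of steps the size drops from $2^{2^{k-1}}+1$ down to at least $3$.

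The size accounting is the crux of getting the exact bound $n=2^{2^{k-1}}+1$. Starting from $N_0 = n = 2^{2^{k-1}}+1$ and using the inequality $|T|\ge \lceil\sqrt{N-1}\rceil+1$, one checks that $N-1$ halves its exponent at each application: writing $N_j - 1 = 2^{2^{k-1-j}}$ after $j$ steps, the recursion $\sqrt{N_j - 1}=2^{2^{k-2-j}}$ gives $N_{j+1}-1 \ge 2^{2^{k-1-(j+1)}}$, so the form is preserved. After $k-1$ applications the surviving set has size at least $2^{2^{0}}+1 = 3$, leaving three elements on which $\pi_1,\dots,\pi_k$ are all monotone. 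One final Erd\H{o}s--Szekeres step (or a direct pigeonhole on three elements for the last permutation $\pi_{k+1}$) secures simultaneous monotonicity of all $k+1$ permutations on a common triple $\{a,b,c\}$. I would present the induction on $k$ explicitly: the inductive hypothesis handles $\pi_1,\dots,\pi_k$ on a set of size $2^{2^{k-1}}+1$, and one Erd\H{o}s--Szekeres application reduces the $(k+1)$-permutation problem on $[2^{2^{k-1}}+1]$ to the $k$-permutation problem on a subset of the required smaller size.

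The main obstacle I anticipate is purely in the arithmetic of the exponential tower: ensuring that each $\sqrt{\cdot}$ step exactly preserves the doubly-exponential form $2^{2^{\ast}}+1$ rather than losing a constant and breaking the clean bound. The monotonicity-inheritance argument (a monotone restriction stays monotone on any sub-subset) is routine and is what lets the pairwise Erd\H{o}s--Szekeres theorem bootstrap into a simultaneous statement for many permutations, so no genuinely new combinatorial idea is needed beyond careful indexing of the iterated square roots.
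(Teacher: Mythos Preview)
The paper does not supply its own proof of this lemma; it attributes the result to Spencer and says only that it is obtained ``using the $t=2$ case of the Erd\H{o}s--Szekeres theorem iteratively.'' Your plan is exactly that iteration, so at the level of method there is nothing to compare.

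There is, however, a genuine gap in your final step. Your size accounting is correct up to the point where, after $k-1$ applications of Erd\H{o}s--Szekeres, you are left with a $3$-element set on which $\pi_1,\dots,\pi_k$ are simultaneously monotone. From three elements, though, neither ``one final Erd\H{o}s--Szekeres step'' nor ``a direct pigeonhole'' can force the remaining permutation $\pi_{k+1}$ to be monotone on that same triple: Erd\H{o}s--Szekeres on a $3$-element set only guarantees a monotone pair, and an arbitrary permutation of three fixed elements need not be monotone on them. Concretely, for $k=1$ your argument would have to show that any two permutations of $[3]$ are simultaneously monotone on $\{1,2,3\}$, which fails already for $\pi_1=(1,2,3)$ and $\pi_2=(1,3,2)$. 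What your iterated square-root computation actually establishes is the bound for $k$ permutations on $[2^{2^{k-1}}+1]$, not $k+1$; either the statement as printed carries an index shift, or an additional idea beyond the straightforward iteration is needed to absorb the last permutation, and your sketch does not supply one.
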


Lemma~\ref{lem:spencer} presents common monotonous triplets in a relatively high number of orders.
These triplets were helpful in \cite{2011_Chandran} for some other bounds as well.
\begin{proof}[of Theorem~\ref{thm:lower_bound_box_line_graph}]
We can suppose $n = 2^{2^{k-1}} + 1$ and prove $\boxi( \overline{L(K_{n})} ) \geq k + 2$, since for all $n$ we can choose the largest $k$ for which $2^{2^{k-1}} + 1 \leq n$, and then it is sufficient to apply the result to $n':=2^{2^{k-1}} + 1$. 

Assume for a contradiction that there exists an interval-order cover $\{C_1, \ldots,$ $C_{k+1}\}$ of $\overline{L(K_{n})}$ consisting of maximal interval-order subgraphs  of $\overline{L(K_{n})}$.
By Lemma~\ref{lem:maximal_interval_order_co_line_graph}, this interval cover can be given with $k+1$ permutations $\pi_1, \ldots, \pi_{k+1}$ of $V(K_{n})$ such that $C_i = \overline{L(K_n)}^{\sigma_i}$, where $\sigma_i \in \Sigma(\pi_i)$.  

Now, by Lemma~\ref{lem:spencer}, there exist three vertices $v_a,v_b,v_c \in V(K_{n})$ such that for every $i \in [k+1]$, the restriction $\pi^i|_{\{v_a,v_b,v_c\}}$ is monotone.
This implies that the edge in $\overline{L(K_n)}$ between $v_av_c\in E(K_n)$ and $v_bv_d \in E(K_n)$, where $v_d$ is any vertex in $V(K_n)$ different from $v_a$,$v_b$ (which exists because $n\ge 5$), is not covered.
\qed
\end{proof}

\section{Conclusion}\label{sec:conclusion}
In this paper, we focused on interval-order subgraphs of line graphs and showed that their number can be bounded by a polynomial of the number of vertices, leading to the polynomial solvability of related optimization problems.
Using the connection between interval orders and boxicity, we developed general arguments to determine the boxicity of complements of line graphs, including the Petersen graph and a polynomial algorithm whenever the boxicity is bounded.
Additionally, we explored extensions of these arguments, addressing interval-order subgraphs of complements of line graphs and providing new bounds on the boxicity of line graphs.

It is also tempting to apply the meta-method  -- of colouring the edges of the graph so that each colour forms an interval-order subgraph, and an edge may get several colours -- to some popular  ``Mycielsky graphs'' and  ``Ramsey graphs'' and follow a program of generalizations similar to what we did in this article. An account of these results can be found in \cite[Chapter 4]{Caoduro_thesis}.

\medskip

We conclude with a few open problems.
The first is the question posed by Stehl{\'\i}k~\cite{QuestionMatej} for \cite{2023_Caoduro} that has eventually led to the present work. 
While this question is resolved in this paper for the case $k=2$, the general case remains open.

\begin{problem}
Determine the boxicity of Kneser graphs $\KG(n,k)$ with $k\geq 3$. 
\end{problem}
Already establishing $\boxi(\KG(7,3))$ is open.
If this value is $3$, we may hope for $\boxi(\KG(n,k)) = \chi(\KG(n,k)) = n - 2k + 2$.
Alternatively, if it is $4$,  $\boxi(\KG(n,k)) = n - k$ may be true. 
Both options are compatible with our result for $k=2$.

Deciding whether the boxicity of the complement of a line graph is at most $k$ can be solved in polynomial time when $k$ is fixed in advance (Theorem~\ref{thm:box_co_line}). We did not manage to show that fixing $k$  is really necessary.

\begin{problem}\label{prob:co_line_NP_hard}
    Is computing the boxicity of \emph{complements of line graphs} $NP$-hard?
\end{problem}

Assuming a positive answer to Problem \ref{prob:co_line_NP_hard}, we ask if the bound as a function of  $k$ in Theorem~\ref{thm:box_co_line} can be essentially improved.
For instance, \emph{is the computation of boxicity $FPT$ for complements of line graphs?}
Problem \ref{prob:co_line_NP_hard} arises for line graphs as well, where even the membership to the class $XP$ is open:

\begin{problem}\label{prob:line_graph}
Can it be decided in polynomial time whether the boxicity of a  \emph{line graph} is $2$?  
\end{problem}

It is easy to check that $\boxi(L(K_4)) = 3$. Hence, Problem \ref{prob:line_graph} is open only for line graphs of graphs with clique number at most three.

Last, as we saw, boxicity is closely related to minimum covers by interval-order graphs.
This can be achieved by covering the complement with large interval-order subgraphs or, equivalently, by intersecting a small number of small interval completions.
The latter motivates the following problem.

\begin{problem}\label{prob:completion}
What is the complexity of the interval completion problem for line graphs?
\end{problem}

\noindent
Recall that this problem is solved in Section~\ref{sec:petersen} for complements of line graphs.

\subsubsection*{Acknowledgement.}
The authors thank Mat\v{e}j Stehl\'\i k for his question about the boxicity of Kneser graphs~\cite{QuestionMatej} that was the starting momentum of this research.
Marco Caoduro was supported by a Natural Sciences and Engineering Research Council of Canada Discovery Grant [RGPIN-2021-02475]. 

\bibliographystyle{splncs04}
\bibliography{references}

\end{document}